\let\OLDthebibliography\thebibliography
\renewcommand\thebibliography[1]{
	\OLDthebibliography{#1}
	\setlength{\parskip}{0pt}
	\setlength{\itemsep}{0pt plus 0.3ex}
}
\newtheorem{Theorem}{Theorem}
\newtheorem{Lemma}{Lemma}
\newtheorem{Question}{Question}
\newtheorem{Proposition}{Proposition}
\newcommand{\sm}{\!\setminus\!}
\newcommand{\N}{\mathbb N}
\newcommand{\cP}{\mathcal P}
\newcommand{\C}{\mathcal{C}}
\newcommand{\cF}{\mathcal{F}}
\newcommand{\dist}{{\rm dist}}
\newcommand{\fm}{{\rm f}_{\rm max}}
\newcommand{\ex}{{\rm ex}}
\newcommand{\exP}{{\rm ex}_\mathcal{P}}
\newcommand{\length}{\ell}
\newcommand{\CC}{{\C_{<\length}}}
\newcommand{\m}{maximal $\CC$-free plane}
\newcommand{\ms}{maximal $\CC$-free graph embedded on $\Sigma$}
\newcommand{\lens}{lens}
\newcommand{\convex}{convex}
\newcommand{\plane}{\mathcal{P}}
\newcommand{\floor}[1]{\left\lfloor{#1}\right\rfloor}
\newcommand{\ceil}[1]{\left\lceil{#1}\right\rceil}
\begin{document}

\date{}
\title{Faces in girth-saturated graphs on surfaces}
\author{
	Maria Axenovich\thanks{Karlsruhe Institute of Technology, Karlsruhe, Germany, \href{mailto:maria.aksenovich@kit.edu}{\tt maria.aksenovich@kit.edu}.}
	\and Leon Kie\ss le\thanks{Karlsruhe Institute of Technology, Karlsruhe, Germany, \href{mailto:leon.kiessle@student.kit.edu}{\tt leon.kiessle@student.kit.edu}.}
	\and Arsenii Sagdeev\thanks{Karlsruhe Institute of Technology, Karlsruhe, Germany, \href{mailto:sagdeevarsenii@gmail.com}{\tt sagdeevarsenii@gmail.com}.}
	\and Maksim Zhukovskii\thanks{School of Computer Science, University of Sheffield, UK, \href{mailto:m.zhukovskii@sheffield.ac.uk}{\tt m.zhukovskii@sheffield.ac.uk}.}
}

\maketitle

\begin{abstract}
	What is the maximum length $\fm(\ell, \Sigma)$ of a facial cycle of an inclusion-maximal graph with girth at least $\ell$ embedded on a given surface $\Sigma$? If $\Sigma=\plane$ is a plane, we show that $3\ell-11\leq \fm(\ell, \plane)\leq 8\ell-13$. We also prove that $\fm(\ell, \Sigma)$ is bounded for any integer $\ell$ and any closed surface $\Sigma$. 
	For a fixed $\Sigma$, we show that  $\Omega(\ell) =\fm(\ell, \Sigma) = O(\ell^2)$, while for a fixed $\ell\ge 6$, $\fm(\ell, \Sigma)=\Theta(g)$, where $g$ is the genus of $\Sigma$.
\end{abstract}

\section{Introduction}

Tur\'an-type problems play a substantial role in combinatorics since their introduction by Mantel~\cite{Man07} and Tur\'an~\cite{Tur41} in the first half of the 20th century. Perhaps the most extensively studied question of this type is the following. For a given family $\cF$ of graphs, what is the largest possible number of edges $\ex(n,\cF)$ in an $n$-vertex \textit{$\cF$-free} graph, that is, a graph that does not contain any $F \in \cF$ as a subgraph? Let  $\CC$ be a family of cycles of length less than $\length$.  It is known that, for a fixed $\length$, $\ex(n,\CC)=O(n^{1+1/\floor{(\length-1)/2}})$, and that the bound is asymptotically tight for some small values of $\length$, see \cite{AHL02,DB91, FS}.

Problems of this kind have a rich history of study and numerous variations, see surveys~\cite{FS, Sid95,Ver16}. Another variation was suggested by Dowden~\cite{Dow16}, who asked for the largest possible number of edges $\exP(n,\cF)$ in an $n$-vertex \textit{plane} $\cF$-free graph, where $\cP$ stands for the plane. A direct application of Euler's formula yields that $\exP(n,\CC) \le \frac{\length}{\length-2}(n-2)$ for all $n\ge \length \ge 3$ which is essentially tight. For more partial results on Dowden's problem, we refer the reader to~\cite{CLLS22,GGMPX22,GLZ24,LSS19,SWY23,SWY24} and the references therein.

Here we consider another natural parameter of graphs on surfaces: the length $\fm(G)$ of a longest facial cycle of a graph $G$ embedded on a connected surface $\Sigma$. If $G$ has no facial cycles, i.e. if the boundary of every face is either not connected or contains a vertex of multiplicity at least $2$, then we define $\fm(G)=0$. We refer the reader to a classical book~\cite{MT01} by Mohar and Thomassen for basic definitions. We say that $G$ is \textit{maximal $\CC$-free graph embedded on $\Sigma$} if $G$ is $\CC$-free, but adding any new edge to $G$ with both endpoints in $G$ creates either a crossing on $\Sigma$ or a cycle of length less than $\length$. Further, let
\begin{equation*}
	\fm(\length, \Sigma) = \max\{\fm(G):  ~ G \mbox{ is a maximal $\CC$-free graph embedded on $\Sigma$}\}.
\end{equation*}

By considering a cycle of length $2\length-3$ bounding a disc on a surface $\Sigma$, we immediately see that $\fm(\length, \Sigma)\ge 2\length-3$ for each surface $\Sigma$ and $\length \ge 3$.  Here, we show that $\fm(\length, \plane)=2\length-3$ for $\length=3,4,5$, while the first author, Ueckerdt, and Weiner \cite[Lemma~5]{AUW} proved this for $\length=6$. Our first main result provides general upper and lower bounds on $\fm(\length, \plane)$. 
In particular, it implies that $\fm(\length, \plane)$ is finite and strictly larger than $2\length-3$ for all $\length\ge 7$.

\begin{Theorem}\label{thm1}
	If $3 \le \length \le 6$, then  $\fm(\length, \plane)=2\length-3$. For any $\length\geq 7$, we have
	\begin{equation*}
		3\length-11  \leq \fm(\length, \plane) \leq 8\length-13.
	\end{equation*}
 	Moreover, if $7 \le \length \le 9$, then $\fm(\length, \plane) \ge 3\length-9$.
\end{Theorem}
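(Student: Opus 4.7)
The plan is to address each inequality in the theorem separately. For the universal lower bound $\fm(\length,\plane)\ge 2\length-3$, I start with a cycle $C_0$ of length $2\length-3$ drawn in the plane: since every arc of $C_0$ has length at most $\length-2$, any chord inside the enclosed face would create a cycle of length at most $\length-1$, so the enclosed face is automatically chord-saturated, and I greedily fill the outer face with edges to obtain a maximal $\CC$-free plane graph retaining $C_0$ as a facial cycle. For the matching upper bound when $\length\in\{3,4,5\}$, I assume a facial cycle of length $\ge 2\length-2$ and, by a short case analysis exploiting the small girth, locate two non-adjacent boundary vertices with no short connecting path, contradicting maximality; the case $\length=6$ is already handled by \cite[Lemma~5]{AUW}.

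For the main upper bound $\fm(\length,\plane)\le 8\length-13$, let $G$ be a maximal $\CC$-free plane graph with a facial cycle $C=v_1\ldots v_k$ bounding a face $F$. The starting observation is that every non-adjacent pair $v_i,v_j$ on $C$ satisfies $\dist_G(v_i,v_j)\le\length-2$; otherwise the chord $v_iv_j$ could be routed through $F$ without creating a short cycle, contradicting maximality. Assuming $k>8\length-13$, I pick a set $S$ of about $k/(\length-1)$ sample vertices on $C$ at cyclic spacing $\length-1$, and for each consecutive pair in $S$ I fix a shortest path of length at most $\length-2$ lying outside $F$. Planarity forces these paths to be nested, tree-like, in the complement of $F$; I then plan to bound the total edge count of the planar subgraph they cut out by invoking the girth-$\length$ Euler inequality $|E|\le\tfrac{\length}{\length-2}(|V|-2)$. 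Balancing the number of samples against the edges their short paths can contribute should deliver $k\le 8\length-13$.

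For the lower bounds $\fm(\length,\plane)\ge 3\length-11$ and, when $7\le\length\le 9$, $\fm(\length,\plane)\ge 3\length-9$, I construct explicit maximal $\CC$-free plane graphs with a long face. I take $C$ of the stated length as the boundary of the target face $F$ and attach on its outer side a small planar ``cap'' of auxiliary vertices and paths. Only $O(\length)$ pairs of boundary vertices are at arc distance $\ge\length-1$ on $C$, so the cap need only supply, for each such pair, an external path of length at most $\length-2$, while creating no cycle of length less than $\length$. I then greedily extend the graph outside the cap to full maximality without touching $F$. The stronger bound $3\length-9$ in the narrow range $7\le\length\le 9$ comes from a denser, tailor-made cap that exploits the extra slack at small $\length$.

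The main obstacle will be the upper bound $8\length-13$. Extracting the local constraint $\dist_G(v_i,v_j)\le\length-2$ is straightforward, but converting it into a global bound on $k$ requires carefully tracking how many short paths can coexist outside $F$ without either crossing or creating a short cycle. Pinning down the right constant --- balancing the pigeonhole on sample vertices against the girth-planarity Euler inequality, and ruling out pathological nestings of the short paths --- is where the real technical work lies, and narrowing the gap to the lower bound $3\length-11$ would require a significant refinement of this step.
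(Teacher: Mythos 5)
Your universal lower bound $\fm(\ell,\plane)\ge 2\ell-3$ is handled correctly: a $(2\ell-3)$-cycle bounding a disc is already chord-free inside, and extending outside to maximality preserves the long face. Your starting observation for the upper bound --- that any two vertices of a facial cycle $C$ are at distance at most $\ell-2$ in $G$ --- is exactly the paper's Lemma~\ref{cl0} and is correct. Beyond these two points, however, your proposal diverges from a working argument.

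The upper bound $8\ell-13$ is where the real gap lies. Your plan is to place roughly $k/(\ell-1)$ samples on $C$, join consecutive samples by short paths, and then "balance" a pigeonhole count against the planar girth inequality $|E|\le\tfrac{\ell}{\ell-2}(|V|-2)$. This cannot produce a \emph{linear} bound on $k$. The union of $m$ short paths of length at most $\ell-2$ is a girth-$\ge\ell$ planar graph with at most $m(\ell-2)$ edges and at least $m$ vertices; such a graph is nearly a forest, and the Euler inequality is simply satisfied --- there is no tension to exploit. Moreover, your assertion that planarity forces the short paths to be "nested, tree-like" in the complement of $F$ is not established: shortest paths between non-consecutive boundary points can recombine, branch and share long stretches, and nothing you have said controls that. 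A global Euler count of this kind is precisely the tool the paper uses for closed surfaces (Section~\ref{Ssurf}), and there it yields only $O(\ell^2)$, not $O(\ell)$. The paper's linear bound relies on a structural observation you are missing: a shortest path between two boundary vertices has at most one "ear" of $C$ (Lemma~\ref{conv_geod}), which lets one assemble a $C$-\convex{} $(x,y;\ell)$-\lens{} for any far-apart pair $x,y$ on $C$ (Lemma~\ref{conv_lens}). Taking two such lenses at two perpendicular antipodal pairs forces either one lens to pass near a quarter-point of $C$ (giving $\|C\|\le 8\ell-13$ via \Cref{convex_ineq}) or a decomposition of $C'\cup C''$ into four closed walks of total length at most $4\ell-4$, one of which must contain a short cycle. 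That interlacing-lenses step is the key idea absent from your outline, and without it the linear constant does not materialise.

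Your lower bound sketch is directionally right but not a proof: you describe attaching "a small planar cap" supplying short exterior paths without producing the cap. The paper gives explicit gadgets --- for $3\ell-11$, a subdivided wheel $W(\ell)$ with three spokes of length $2$ and segments of lengths $\ell-4,\ell-4,\ell-3$; for $3\ell-9$ when $7\le\ell\le 9$, the edge-disjoint union of $C_9$ and $C_{3\ell-9}$ sharing three equidistant vertices --- and verifies maximality directly. You would need to exhibit something equally concrete and check that every non-adjacent boundary pair lies on a cycle of length $\le 2\ell-3$. Also, your estimate that "only $O(\ell)$ pairs of boundary vertices are at arc distance $\ge\ell-1$" is off: on a cycle of length $\Theta(\ell)$ there are $\Theta(\ell^2)$ such pairs, though this does not by itself doom a careful cap construction. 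Finally, for $3\le\ell\le 5$ the upper bound requires a genuine case analysis (especially $\ell=5$, where the paper's argument is delicate); "a short case analysis exploiting the small girth" needs to be carried out, not merely promised.
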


Each connected \textit{closed} surface, namely a compact surface without a boundary, is homeomorphic to either a sphere with $g$ handles $\mathbb{S}_g$ or to a sphere with $g$ crosscaps $\N_g$ for some non-negative integer $g$, see e.g. \cite[Theorem~3.1.3]{MT01}.
Our second main result provides a general upper bound on $\fm(\length, \Sigma)$ for these surfaces as well as a lower bound greater than the trivial lower bound $2\ell -3$ for most of the values of $g$ and $\length$.

\begin{Theorem}\label{thm_orient_surf}
	Let $g\ge 1$ and $\ell\geq 3$  be  integers  and $\Sigma$ be a surface,  $\Sigma \in \{\mathbb{S}_g, \N_g\}$. Then
	\begin{equation*}
		g(\ell-5) \le \fm(\length, \Sigma) \le 24(2g+1)\ell^2.
	\end{equation*}
\end{Theorem}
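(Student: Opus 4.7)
My plan splits naturally into the lower and upper bound.

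For the lower bound $g(\ell-5)\le\fm(\length,\Sigma)$, I would construct an explicit inclusion-maximal $\CC$-free graph on $\Sigma$ with a facial cycle of length at least $g(\ell-5)$. Starting from a small base graph of girth at least $\ell$ on the sphere with a designated ``big'' face, I would realize each of the $g$ handles (if $\Sigma=\mathbb{S}_g$) or crosscaps (if $\Sigma=\mathbb{N}_g$) as a local gadget: remove two small disc-faces lying outside the big face, identify their boundaries through the handle/crosscap, and wire $\ell-5$ new vertices into the big face so that shortcutting any of them would create a cycle of length less than $\ell$. Girth, maximality, and the facial-length gain of $\ell-5$ per gadget then follow from a local verification, and placing the $g$ gadgets far apart on the base ensures they do not interfere.

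For the upper bound, let $G$ be a maximal $\CC$-free graph on $\Sigma$ with a facial cycle $C$ of length $L$ bounding a face $F$. We may assume the embedding is $2$-cell, so $F$ is a disc. The key maximality property is then: for any $u,v\in V(C)$ with $\ell-1\le d_C(u,v)\le L-\ell+1$, either $uv\in E(G)$ or $d_G(u,v)\le\ell-2$, because otherwise one could draw $uv$ inside $F$ without a crossing and without creating a short cycle. Pick $k=\lfloor L/\ell\rfloor$ sample vertices $u_0,\dots,u_{k-1}$ on $C$ at cyclic distance approximately $\ell$, and for each consecutive pair a shortest shortcut path $P_i$ in $G$ of length at most $\ell-2$. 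The cycle $Z_i$ formed by $P_i$ together with the shorter $C$-arc between $u_i$ and $u_{i+1}$ has length at most $2\ell-2$ and is embedded on $\Sigma$.

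The core of the argument is a combined Euler/topological counting. Since $\Sigma$ has Euler genus at most $2g$, a homological pigeonhole guarantees that all but at most $O(g)$ of the cycles $Z_i$ are contractible in $\Sigma$ and, in fact, bound pairwise disjoint discs $D_i\subset\Sigma$. Inside each disc $D_i$ the subgraph of $G$ is plane, has girth at least $\ell$, and has $Z_i$ (of length at most $2\ell-2$) as its outer face. A direct Euler computation on this plane subgraph --- in the spirit of the plane case of Theorem~\ref{thm1} --- bounds the number of $C$-vertices inside $D_i$ by $O(\ell^2)$. Summing over the $O(g)$ genus-carrying cycles and the up to $k$ planar discs yields $L=O(g\ell^2)$, and carefully tracking constants produces the stated bound $24(2g+1)\ell^2$, where the ``$+1$'' absorbs the contribution of $\Sigma$ itself in the Euler-characteristic accounting.

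The hardest step, I expect, will be the topological pigeonhole: arguing that the discs $D_i$ bounded by contractible cycles are pairwise disjoint regardless of how the shortcut paths wind through the handles of $\Sigma$, and that the at most $O(g)$ non-contractible $Z_i$ can really be controlled via the Euler genus without losing too much. A secondary obstacle is that the subgraph of $G$ lying inside each disc is not itself a maximal plane $\CC$-free graph, so some care is needed when adapting the planar Euler argument to bound the number of $C$-vertices in $D_i$.
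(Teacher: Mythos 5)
Your upper-bound plan has a genuine gap at its core. The ``homological pigeonhole'' step --- that all but $O(g)$ of the cycles $Z_i$ are contractible \emph{and} bound pairwise disjoint discs --- is not justified, and I do not see how it could be. The shortcut paths $P_i$ are not vertex-disjoint: two consecutive shortcuts can share long stretches, wind around the same handle, or nest in complicated ways, so even when the cycles $Z_i$ are contractible, the discs they bound can overlap heavily. Euler-characteristic considerations do bound the number of ``independent'' non-separating cycles, but they do not say that all but $O(g)$ of a family of $L/\ell$ short cycles are contractible (all of them could, for instance, be homologous to the same non-trivial class without any being contractible), nor that the contractible ones have disjoint interiors. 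The subsequent Euler count is also underspecified: it is unclear which $C$-vertices are ``inside'' $D_i$, and the dimensional accounting (summing $O(\ell^2)$ over $k\approx L/\ell$ discs gives $O(L\ell)$, not a bound on $L$) does not close.

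The paper's proof sidesteps exactly this difficulty by keeping the auxiliary structure small: instead of $\Theta(L/\ell)$ shortcut cycles, it samples only $s+1=6(2g+1)$ vertices $W$ on $C$, takes a $(W,w_0)$-shortest-path tree $T$, adds a nearby copy $C'$ of a short cycle inside $F$ together with a matching $M$ to $W$, and forms the bounded-size graph $G'=T'\cup C'$. Euler's formula on $\Sigma$ (Lemma~\ref{2discs}) is applied \emph{to this small graph $G'$}, not to $G$, and yields at least two disc faces of $G'$ of degree at most $5$ bounded by cycles. The second ingredient you are missing is Lemma~\ref{remote_vertex}: a monotonicity argument for nearest-point projections onto the boundary of such a disc shows that if the arc of $C$ trapped in the disc is long enough, some vertex on it is at distance more than $\ell/2-1$ in $G$ from the disc boundary. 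Applying this to two disjoint disc faces gives two vertices of $C$ at $G$-distance more than $\ell-2$, contradicting Lemma~\ref{cl0}. This two-remote-vertices contradiction replaces your per-disc Euler count entirely. For the lower bound, your ``local gadget per handle'' sketch is only a plan; the paper instead gives one global construction --- a subdivided wheel with $g+1$ spokes and $g+1$ long segments, plus a second star $T'$ joined to segment midpoints and routed through the handles --- and verifies girth and maximality directly, achieving $(2g+2)(\ell-4)$. You would need to supply a concrete gadget and a maximality proof (the nontrivial part is showing that \emph{every} non-edge of $C$ is already within distance $\ell-2$) before your lower bound could be accepted.
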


We remark that $\fm(\length, \Sigma') = \fm(\length, \Sigma)$ for any $\Sigma \in \{\mathbb{S}_g, \N_g\}$ and any $\Sigma'$ that is homeomorphic to a surface obtained from $\Sigma$ by removing a finite number of points or discs, because any finite graph $G$ can be embedded on $\Sigma'$ if and only if it can be embedded on $\Sigma$. This implies that the bounds in \Cref{thm_orient_surf} hold for many other surfaces, and that, in particular, for a sphere $\mathbb{S}_0$ we have $\fm(\length, \plane)=\fm(\length, \mathbb{S}_0)$.


\vspace{2mm}

\noindent \textbf{Paper outline.}
In \Cref{def}, we introduce our notations and make some basic observations. We prove Theorem~\ref{thm1} for $\length\geq 7$ in \Cref{S2}, while in \Cref{SA1}, we deal with the remaining small values of $\ell$. In \Cref{Ssurf}, we prove \Cref{thm_orient_surf}.
We present an alternative, weaker but shorter argument for an arbitrary closed surface in \Cref{SA}.
Finally, we give concluding remarks and state open problems in \Cref{S3}.

\vspace{2mm}

\noindent \textbf{Note added.}
Shortly after we published a previous version of this paper, P\'alv\"olgyi and Z\'olomy~\cite{PZ25} independently showed that $\fm(\length, \plane) = O(\ell)$. 

\section{Definitions and basic observations}\label{def}

We denote the set of integers $\{1, \ldots, n\}$ by $[n]$. All graphs in this paper are finite. For a graph $G=(V, E)$,  we denote the number of its edges by $\|G\|$. We denote a cycle of length $n$ by $C_n$. When clear from the context, we shall identify a graph embedded on a surface with its embedding. In particular, we identify a \textit{planar} graph with the corresponding \textit{plane} one. Let $G$ be a graph embedded on a connected closed surface $\Sigma$. We call the connected components of $\Sigma\sm G$ \textit{faces of $G$}, see~\cite[Section~3.1.4]{gross2001topological}. If a cycle in $G$ forms a boundary of a face, we call the cycle \textit{facial}. For all standard graph theoretic notions, we refer the reader to a book by Diestel~\cite{D}.

\vspace{2mm}

For a walk $Q$, we denote its length, i.e., the number of its edges counting repetitions, by $\|Q\|$. We say that a walk is \textit{non-trivial} if the underlying graph contains a cycle. Equivalently, a walk is non-trivial if it contains at least one edge of odd multiplicity.

\vspace{2mm}

Let us denote the length of a shortest $x,y$-path in $G$ by $\dist_G(x,y)$ or simply  $\dist(x,y)$ when $G$ is clear from the context. For a path $P$ and two of its vertices $x$ and $y$, we write $xPy$ to denote the subpath of $P$ with the endpoints $x$ and $y$. We also concatenate these objects in a straightforward manner, e.g. $xPyQz$ stands for the walk that consists of the path  $xPy$ followed by the path $yQz$.

\vspace{2mm}

We call proper connected subgraphs of a cycle its \textit{segments}. Two vertices of a cycle $C$ are called {\it antipodal} if the distance between them on the cycle is $\lfloor \|C\|/2 \rfloor$. We call a path $P$ with  at least two vertices an {\it ear} of a cycle $C$ if the endpoints of $P$ are vertices of $C$ and no other vertex or edge of $P$ belongs to $C$. Observe that every path $P$ with the endpoints on a cycle $C$ is an edge-disjoint union of segments of the cycle $C$ and its ears. We say that $P$ is \textit{$C$-\convex} if there is at most one such ear. More formally, for a cycle $C$ and two of its vertices $x$ and $y$, we say that an $x,y$-path $P$ is \textit{$C$-\convex} if either $P$ is a segment of $C$ or for some vertices $x'$ and $y'$ on $P$, both $xPx'$ and $y'Py$ are segments of $C$ and $x'Py'$ is an ear of $C$. Note that $x'$ and $y'$ may coincide with $x$ and $y$, respectively, see \Cref{fig_convex0}.

\vspace{2mm}

We say that vertices $x$ and $y$ {\it split} a cycle $C'$ into two edge-disjoint $x,y$-paths $P'$ and $P''$ if $C'=P'\cup P''$. For a cycle $C'$ of length at most $2\length-2$ and two of its vertices $x$ and $y$, we call $C'$ an \textit{$(x,y;\ell)$-\lens}, or simply \textit{$x,y$-\lens} when the value of $\ell$ is clear from the context. If $x$ and $y$ split an $x,y$-\lens{} $C'$ into two $x,y$-paths that are both $C$-\convex{} for a cycle $C$, we say that $C'$ is \textit{$C$-\convex} as well, \Cref{fig_convex1}.

\begin{figure}[htb]
	\centering
	\begin{subfigure}[b]{.49\linewidth}
		\centering
		\includegraphics[scale=0.86]{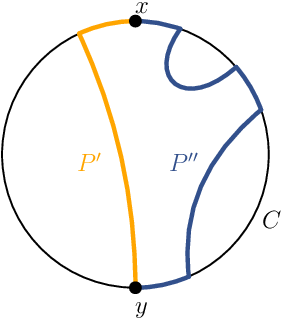}
		\captionsetup{justification=centering}
		\caption{An $x,y$-path $P'$ is $C$-convex, while $P''$ is not. \\ \ }
		\label{fig_convex0}
	\end{subfigure}
	\begin{subfigure}[b]{.49\linewidth}
		\centering
		\includegraphics[scale=0.86]{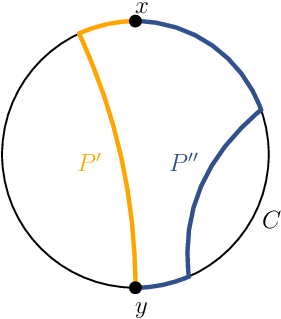}
		\captionsetup{justification=centering}
		\caption{Both $P'$ and $P''$ are $C$-convex. The cycle $C'=P'\!\cup\! P''$ \\ is a $C$-convex $(x,y;\ell)$-\lens{} if $\|C'\|\le 2\length-2$.}
		\label{fig_convex1}
	\end{subfigure}
	\captionsetup{justification=centering}
	\caption{An illustration to the definitions of $C$-convex path and lens.}
	\label{fig_convex}
\end{figure}

\vspace{2mm}

We say that a graph is a {\it subdivided wheel} if it is a union of a cycle $C$, called the {\it outer cycle of the wheel}, and a tree $T$ with exactly one vertex $c$, called the \textit{center of the wheel}, of degree greater than $2$ such that a vertex of $T$ belong to $C$ if and only if it is a leaf of $T$. We say that a path in $T$ connecting $c$ to a leaf of $T$ is a {\it spoke of the wheel} and a path in $C$ connecting two consecutive leaves of $T$ is a {\it segments of the wheel}.


\vspace{2mm}

For a graph $G$, a vertex set $W \subseteq V(G)$ and $w \in W$, we say that a tree $T \subseteq G$ is a \textit{$(W,w)$-tree} if $W \subseteq V(T)$, the leaf-set of $T$ is contained in $W$, and for each $w' \in W$, the $w,w'$-path in $T$ is a shortest $w,w'$-path in $G$.

\vspace{2mm}


We shall repeatedly use the following observations.
\begin{Lemma} \label{cl0}
	Let $\ell \ge 3$ be an integer, $\Sigma$ be a connected closed surface, $G$ be a maximal $\CC$-free graph embedded on $\Sigma$, and $C$ be its  facial cycle. If $x$ and $y$ are two vertices of $C$, then $\dist_G(x,y) \leq \length-2$. 
\end{Lemma}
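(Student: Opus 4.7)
The idea is to exploit the maximality of $G$ by trying to add the edge $xy$ through the face bounded by $C$. First I would dispose of the trivial case: if $xy\in E(G)$, then $\dist_G(x,y)=1\le \ell-2$ because $\ell\ge 3$. I would also note that we may assume $x\neq y$; otherwise the distance is $0$.

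Next, assume $xy\notin E(G)$ and let $F$ be the face of $G$ whose boundary is the cycle $C$. Since $F$ is a connected open subset of $\Sigma$ and both $x$ and $y$ lie on its boundary, I can choose a simple arc $\gamma$ from $x$ to $y$ whose interior lies entirely in $F$. Adding $\gamma$ as a new edge $xy$ therefore produces an embedding of $G+xy$ on $\Sigma$ with no crossings, since $\gamma$ avoids $G$ except at its endpoints.

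Now I would invoke the maximality of $G$. Because the new edge $xy$ can be drawn without creating a crossing, its addition must create a cycle $C^{\ast}$ of length less than $\ell$. This cycle has to use the new edge $xy$ (otherwise $C^{\ast}\subseteq G$, contradicting that $G$ is $\mathcal C_{<\ell}$-free). Deleting the edge $xy$ from $C^{\ast}$ leaves an $x,y$-path $P$ in $G$ of length $\|C^{\ast}\|-1\le \ell-2$, so $\dist_G(x,y)\le \|P\|\le \ell-2$, as required.

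The only step that requires any care is the claim that one can actually route $\gamma$ through $F$; this is really just a topological observation (connected open sets are path-connected, and the two boundary vertices $x,y$ can be reached from the interior of $F$ along the corresponding angle-corners at $x$ and $y$ in the rotation system of the embedding). No further obstacle arises, and the maximality argument closes the proof immediately.
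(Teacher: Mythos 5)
Your proof is correct and follows essentially the same argument as the paper: add the edge $xy$ through the face bounded by $C$, note that planarity is preserved, and use maximality to conclude that a short cycle containing $xy$ is created, which yields a short $x,y$-path in $G$. You simply spell out the trivial cases and the topological feasibility of routing the arc in more detail than the paper does.
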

\begin{proof}
	Let $x$ and $y$ be two vertices of $C$. If $x$ and $y$ are not adjacent,  adding the edge $xy$ to $G$ inside a face bounded by $C$ does not create a crossing. Now the maximality of $G$ implies that this new edge $xy$  belongs to a cycle of length at most $\length-1$, and thus $\dist_G(x,y) \leq \length-2$, as desired.
\end{proof}

\begin{Lemma} \label{path_mix}
	If $z$ is a common vertex of a shortest $x,y$-path $P$ and a shortest $x',y$-path $Q$ in a graph $G$, then $Q'=x'QzPy$ gives a shortest $x',y$-path as well.
\end{Lemma}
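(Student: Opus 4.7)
The plan is to show directly that $Q'$ is a walk of length $\dist_G(x',y)$, and then deduce that it must in fact be a shortest $x',y$-path. The core observation is the standard subpath principle: any subpath of a shortest path is itself shortest. Applied to the shortest $x,y$-path $P$ at its internal vertex $z$, this tells me that $\|zPy\| = \dist_G(z,y)$, since otherwise a strictly shorter $z,y$-walk concatenated with $xPz$ would contradict the minimality of $\|P\|$. Applied to the shortest $x',y$-path $Q$, the same reasoning gives $\|zQy\| = \dist_G(z,y)$. Hence $\|zPy\| = \|zQy\|$.

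Having established this, I would just compute
\[
\|Q'\| = \|x'Qz\| + \|zPy\| = \|x'Qz\| + \|zQy\| = \|Q\| = \dist_G(x',y),
\]
so $Q'$ is an $x',y$-walk whose length equals the distance $\dist_G(x',y)$. If $Q'$ had a repeated vertex, one could excise the closed subwalk between its two occurrences and obtain a strictly shorter $x',y$-walk, contradicting the equality just derived. Thus $Q'$ is a genuine path, and since its length equals $\dist_G(x',y)$, it is a shortest $x',y$-path. I do not foresee any real obstacle here; the whole argument is essentially a one-line subpath-swap resting on the fact that truncations of shortest paths remain shortest.
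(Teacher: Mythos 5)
Your proof is correct and follows essentially the same approach as the paper: the paper argues by contradiction (if $\|Q'\| > \|Q\|$ then the walk $xPzQy$ would be shorter than the shortest path $P$), while you invoke the subpath principle directly to get $\|zPy\| = \|zQy\| = \dist_G(z,y)$, but both rest on the same subpath-exchange observation. You are a bit more explicit than the paper in noting that $Q'$, a priori only a walk of length $\dist_G(x',y)$, must therefore be a simple path.
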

\begin{proof}
	If $\|Q\|< \|Q'\|$, then $\|zQy\|<\|zPy\|$. Hence, an $x,y$-walk $P'=xPzQy$ is shorter than the shortest $x,y$-path $P$, a contradiction. Therefore, the $x',y$-walk $Q'$ is no longer than the shortest $x',y$-path $Q$, and thus $Q'$ gives a shortest $x',y$-path as well.
\end{proof}

\begin{Lemma} \label{Wwtree}
	For every graph $G$, $W \subseteq V(G)$ and a vertex $w \in W$, there exists a $(W,w)$-tree $T$ in $G$.
\end{Lemma}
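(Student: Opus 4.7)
The plan is to build $T$ by a greedy BFS-like procedure. First I would order the vertices of $W$ in non-decreasing distance from $w$, say $w = w_0, w_1, \ldots, w_k$, and then construct a nested sequence of trees $T_0 \subseteq T_1 \subseteq \cdots \subseteq T_k =: T$ satisfying the inductive invariant that $T_i$ contains $\{w_0, \ldots, w_i\}$, all of its leaves lie in $W$, and the unique $w, w_j$-path in $T_i$ is a shortest $w, w_j$-path in $G$ for every $j \le i$. (If some vertex of $W$ is not reachable from $w$ in $G$, then no $(W,w)$-tree can exist, so I tacitly assume all of $W$ lies in the connected component of $w$.)

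The base case $T_0 := \{w\}$ is immediate. For the inductive step, if $w_i$ already belongs to $V(T_{i-1})$ I set $T_i := T_{i-1}$; otherwise I would pick any shortest $w, w_i$-path $Q$ in $G$, let $z$ be the vertex of $Q$ closest to $w_i$ along $Q$ that still belongs to $V(T_{i-1})$, and attach the tail by setting $T_i := T_{i-1} \cup zQw_i$. Because of the choice of $z$, the subpath $zQw_i$ meets $T_{i-1}$ only at $z$, so $T_i$ is obtained from a tree by appending a pendant path at $z$ and hence remains a tree. Likewise the only potentially new leaf is $w_i \in W$, while $z$ gains a neighbor and therefore ceases to be a leaf even if it were one before, so the leaves of $T_i$ all still lie in $W$.

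The crucial verification is that the $w, w_i$-path $wPzQw_i$ in $T_i$, where $P$ denotes the $w, z$-path inside $T_{i-1}$, is a shortest $w, w_i$-path in $G$. Here I would either invoke Lemma \ref{path_mix} applied (after a suitable reversal) to the shortest paths $P$ and $Q$ at their common vertex $z$, or argue directly: by the inductive hypothesis $\|P\| = \dist_G(w, z) \le \|wQz\|$, so
\[
\|P\| + \|zQw_i\| \;\le\; \|wQz\| + \|zQw_i\| \;=\; \|Q\| \;=\; \dist_G(w, w_i),
\]
and any $w, w_i$-walk of length at most $\dist_G(w, w_i)$ is automatically a shortest $w, w_i$-path. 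The older $w, w_j$-paths for $j < i$ are untouched since $T_{i-1} \subseteq T_i$, so the invariant is preserved.

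I do not expect a serious obstacle here: the only thing to track carefully is that each new splice at $z$ neither creates a cycle nor produces a $w, w_i$-path longer than $\dist_G(w, w_i)$, and both are handled by choosing $z$ as the last vertex of $Q$ lying in $T_{i-1}$, together with the displayed inequality. Setting $T := T_k$ then yields the required $(W, w)$-tree.
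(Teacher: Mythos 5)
Your proposal is correct and takes essentially the same approach as the paper: iteratively grow the tree by attaching, for each new $w_i \in W$, the tail of a shortest $w,w_i$-path starting from its last vertex already in the current tree, and verify that the resulting $w,w_i$-path stays shortest (you give the inequality directly; the paper simply invokes Lemma~\ref{path_mix}, which you also note as an option). The preliminary ordering of $W$ by distance from $w$ is harmless but unnecessary, as your own argument never uses it.
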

\begin{proof}
	Construct the desired tree iteratively. Initially, the tree $T$ contains only one vertex, $w$. At each step, we take a new vertex $w' \in W\sm \{w\}$ and consider a shortest $w,w'$-path $Q$ in $G$. Let $v$ be the last point of $Q$, counting from $w$, that is also a vertex of $T$. Note that the union of $vQw'$ with the unique $w,v$-path $P$ in $T$ is also a shortest $w,w'$-path in $G$ by \Cref{path_mix} applied with  $w',w$ playing the roles of $x',y$, respectively, and $v$ playing the roles of $x,z$. We add the path $vQw'$ to the tree $T$. Once all the vertices in $W$ are processed, the tree $T$ satisfies all the desired properties by construction.
\end{proof}

\section{Proof of Theorem~\ref{thm1} for $\length\geq 7$} \label{S2}

Throughout this section, let $\ell \ge 7$ be an integer\footnote{All the arguments in this section are valid for all $\ell \ge 3$, but they give only weak bounds when $3\le \ell \le 6$. For the exact result for these values of $\ell$, see \Cref{SA1}.}, $G$ be a \m{} graph,  and $C$ be its facial cycle. We shall need several preliminary lemmas about shortest paths and \lens es in $G$.


\begin{Lemma} \label{max_geod} 
	Let $xx', yy'\in E(C)$, $Q$ be a shortest $x',y$-path and $Q'$ be a shortest $x,y'$-path in $G$. If $xx', yy' \not\in E(Q) \cup E(Q')$, then $Q$ and $Q'$ are vertex disjoint.
\end{Lemma}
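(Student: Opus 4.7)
The natural approach is proof by contradiction using a parity-of-multiplicity trick combined with \Cref{cl0}. I would begin by assuming that $Q$ and $Q'$ share a common vertex $z$ and labeling the four subpath lengths $a = \|xQ'z\|$, $b = \|zQ'y'\|$, $c = \|x'Qz\|$, $d = \|zQy\|$, so that $\|Q'\| = a+b$ and $\|Q\| = c+d$.

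The heart of the argument is to construct two short closed walks, one through the edge $xx'$ and the other through $yy'$, and to extract a short cycle from each. The first walk $W_1$ goes from $x$ to $x'$ along the edge $xx'$, from $x'$ to $z$ along $x'Qz$, and back from $z$ to $x$ along the reverse of $xQ'z$; it has length $a+c+1$. Since by hypothesis $xx' \notin E(Q) \cup E(Q')$, the edge $xx'$ appears in $W_1$ exactly once, so $W_1$ is non-trivial and therefore contains a cycle of $G$. The girth assumption forces that cycle to have length at least $\ell$, giving $a+c \ge \ell - 1$. An analogous construction using $yy'$, $zQ'y'$, and $zQy$ produces a second closed walk $W_2$ of length $b+d+1$ and yields $b+d \ge \ell - 1$.

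Summing the two inequalities gives $\|Q\|+\|Q'\| = a+b+c+d \ge 2\ell - 2$. On the other hand, since all four vertices $x, x', y, y'$ lie on the facial cycle $C$, \Cref{cl0} forces $\|Q\| = \dist_G(x',y) \le \ell - 2$ and $\|Q'\| = \dist_G(x,y') \le \ell - 2$, so $\|Q\|+\|Q'\| \le 2\ell - 4$, which is the desired contradiction. The only subtle point is handling degenerate configurations in which some of $x, x', y, y'$ coincide or $z$ equals one of them, making some of $a, b, c, d$ vanish; in each such case the walks $W_1$ or $W_2$ still contain $xx'$ or $yy'$ as an odd-multiplicity edge, so the same bounds go through verbatim.
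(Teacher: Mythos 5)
Your proof is correct and follows essentially the same route as the paper's: you form the two closed walks $xx'QzQ'x$ and $yy'Q'zQy$ (your $W_1, W_2$), use the missing edges $xx'$ and $yy'$ to certify non-triviality, and play the girth lower bound against the $\ell-2$ upper bound from Lemma~\ref{cl0}. The only cosmetic difference is the order of the accounting: the paper sums the two walk-lengths first and extracts one short cycle, whereas you extract a girth inequality from each walk separately and then sum; the two are logically interchangeable.
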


\begin{proof}
	Assume  for the contrary that $Q$ and $Q'$ share a vertex $z$. Note that a closed walk $C'=xx'QzQ'x$ is non-trivial, since both $Q$ and $Q'$ do not use the edge $xx'$. By a similar argument, a closed walk $C''=yy'Q'zQy$ is non-trivial as well. Moreover, their total length is $\|C'\|+\|C''\| = \|x'Qz\|+\|zQ'x\|+ \|y'Q'z\|+\|zQy\|+2 = \|Q\|+\|Q'\|+2 \le 2\length-2,$
	where the latter inequality is due to  \Cref{cl0}. Hence, either $C'$ or $C''$ contains a cycle of length less than $\length$, a contradiction.
\end{proof}

\begin{Lemma} \label{conv_geod}
	If $x, y \in V(C)$ and $P$ is a shortest $x,y$-path in $G$, then $P$ is $C$-\convex{}.
\end{Lemma}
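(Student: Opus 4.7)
Plan.  I argue by contradiction: suppose $P$ is a shortest $x,y$-path in $G$ that is not $C$-convex.  Decomposing $P$ into maximal segments of $C$ alternating with ears of $C$, the failure of $C$-convexity forces $P$ to contain at least two ears; let $E_1=aPb$ and $E_2=cPd$ be two consecutive such ears, so that $bPc$ is a segment of $C$ (possibly trivial when $b=c$).  Since $C$ bounds a face of $G$, both ears must lie in the complementary region and be drawn there without crossings; a short Jordan-curve argument rules out interleaved configurations and forces the cyclic order of $a,b,c,d$ along $C$ to be precisely $a,b,c,d$.  Since $P$ is simple and cannot revisit $a,b,c,d$, the endpoints $x,y$ must then lie on the arc of $C$ from $d$ to $a$ avoiding $b,c$, which I denote $\alpha_{da}$; the other three arcs I denote $\alpha_{ab}$, $\alpha_{bc}=bPc$, and $\alpha_{cd}$.

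Set $A=\|aPb\|$, $B=\|bPc\|$, $C'=\|cPd\|$.  Every subpath of the shortest path $P$ is itself shortest, so $A=\dist(a,b)\le\|\alpha_{ab}\|$, $C'=\dist(c,d)\le\|\alpha_{cd}\|$, and $A+B+C'=\dist(a,d)\le\|\alpha_{da}\|$.  Applying the girth bound $\ge\ell$ to the three cycles $E_1\cup\alpha_{ab}$, $E_2\cup\alpha_{cd}$, and $aPd\cup\alpha_{da}$ then yields $\|\alpha_{ab}\|,\|\alpha_{cd}\|,\|\alpha_{da}\|\ge\lceil\ell/2\rceil$, while \Cref{cl0} gives $A+B+C'\le\ell-2$.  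These inequalities say exactly that each of the three arcs is long compared to its matching ear (or to the ear-route $a\to d$).

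The contradiction now comes from \Cref{max_geod} applied to the edges $aa^+\in E(\alpha_{ab})$ and $dd^-\in E(\alpha_{cd})$ incident to $a$ and $d$ respectively.  Using the above inequalities I take the shortest $a^+,d$-path as $Q=a^+\to b\to c\to d$ (traversing the rest of $\alpha_{ab}$ and then the ear $E_2$) and the shortest $a,d^-$-path as $Q'=a\to b\to c\to d^-$ (traversing the ear $E_1$, the segment $bPc$, and a portion of $\alpha_{cd}$).  Neither $Q$ nor $Q'$ uses $aa^+$ or $dd^-$, so \Cref{max_geod} forces $Q$ and $Q'$ to be vertex disjoint; however both paths fully contain the segment $bPc$ of $C$ and in particular both contain the vertices $b$ and $c$, which is the required contradiction.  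The main obstacle is justifying that the above $Q,Q'$ are genuinely shortest: in borderline cases such as $\|\alpha_{ab}\|\ge A+3$, the alternative route $a^+\to a\to b$ using the single edge $aa^+$ together with the ear $E_1$ is strictly shorter, so one must switch to a different pair of $C$-edges in the invocation of \Cref{max_geod} (for instance, edges adjacent to $a$ and $d$ on $\alpha_{da}$), and a careful case analysis — exploiting the strict arc-versus-ear gap that triggers each borderline situation — is needed to cover all configurations.
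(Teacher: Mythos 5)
Your overall strategy -- reduce to two ears, set up a pair of paths between nearby $C$-edges, and invoke \Cref{max_geod} to force a contradiction -- is the same as the paper's. But the proof as written has a genuine gap, which you yourself flag in the last paragraph: the specific paths $Q$ and $Q'$ you name (through $b$, $\alpha_{bc}$, and $c$) are not guaranteed to be shortest. Indeed, \Cref{cl0} only gives $A \le \|\alpha_{ab}\|$ (not a near-equality), so as soon as $\|\alpha_{ab}\| \ge A+3$, the detour $a^+ \to a \to P \to d$ is strictly shorter than your proposed $Q$, and \Cref{max_geod} cannot be applied to the paths you wrote down. "Switch to a different pair of $C$-edges and do a careful case analysis" is not a proof; it is precisely the content that is missing, and it is not clear that such a case analysis closes without inventing the paper's central device.

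The paper's proof sidesteps this exact difficulty with a maximality argument that your proposal lacks. It does not guess shortest paths. Instead, it extends $P$ along $C$ to a maximal shortest path $P' = w_x R x P y R w_y$ in a family $\mathcal{F}$ of $C$-extended shortest paths, and then takes \emph{arbitrary} shortest paths $Q$, $Q'$ between the relevant endpoints. It then reroutes each through $P'$ from its first intersection point using \Cref{path_mix}, so the resulting $\widehat{Q}, \widehat{Q}'$ are still shortest, contain the ear $x'Py'$, and -- crucially -- avoid the boundary edges $w_x w_x'$, $w_y w_y'$ \emph{because of the maximality of $P'$} (if $\widehat{Q}$ used $w_x w_x'$, one could extend $P'$ further along $C$, contradicting maximality). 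This is the mechanism that handles the "borderline" situations automatically, and it is what your argument would need to supply. A secondary, smaller gap: your Jordan-curve claim that the cyclic order of $a,b,c,d$ on $C$ is exactly $a,b,c,d$ is asserted but not proved, and it is not entirely obvious once one allows $P$ to continue before $a$ and after $d$; the paper avoids this by first passing to a minimal counterexample in which the two ears are the end-segments $xPx'$ and $y'Py$ of $P$, which pins the configuration down cleanly.
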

\begin{proof}
	Assume  that $P$ contains at least two ears of $C$. Since every subpath of $P$ is a shortest path between its endpoints, we can assume without loss of generality that our counterexample is minimal with respect to inclusion, i.e., that it contains exactly two ears of $C$ as subpaths each of which shares an endpoint with $P$.  
	Namely, for some vertices $x'$ and $y'$ on $P$, both $xPx'$ and $y'Py$ are ears of $C$ and $x'Py'$ is a segment of $C$, see \Cref{fig_lemma5}. Note that $x'$ and $y'$ may coincide.

	
	Let $R$ be the $x,y$-path in $C$ that contains $x'$ and $y'$. Consider the family $\cF$  of shortest $w,w'$-paths of the form $wRxPyRw'$, over all possible $w, w'\in V(R)$. This family $\cF$ is non-empty since it  contains $P$. Let $P'= w_xRxPyRw_y$ be a maximal element of $\cF$  ordered by inclusion. Further, let $w_x'$ be a vertex adjacent to $w_x$ on $C$ such that $w_xw_x'$ is not an edge of $P'$. Observe that $w_x' \not\in V(P')$ since otherwise the $w_x,w_y$-path $w_xw_x'P'w_y$ is shorter than $P'$, a contradiction. Similarly, let $w_y'$ be a vertex adjacent to $w_y$ on $C$ such that $w_yw_y'$ is not an edge of $P'$, and observe that $w_y' \not\in V(P')$.

Consider a shortest $w_x', w_y$-path $Q$.  We see that $Q$ intersects $w_xP'x'$ by planarity. Denote the first point of $w_xP'x'$, counting from $w_x$, that lies on $Q$ by $z$. \Cref{path_mix} implies that $\widehat{Q}=w_x'QzP'w_y$ is also a shortest $w_x',w_y$-path. In addition, the maximality of $P'$ implies that $z\neq w_x$ and thus $w_x'w_x\not\in E(\widehat{Q})$.
	Similarly, a shortest $w_y',w_x$-path $Q'$ intersects $w_yP'y'$, and we denote the first point of $w_yP'y'$, counting from $w_y$, that lies on $Q'$ by $z'$. Then $\widehat{Q}'=w_y'Q'z'P'w_x$ is also a shortest $w_y', w_x$-path and $w_y'w_y\not\in E(\widehat{Q}')$.
	
	On the one hand, note that both $\widehat{Q}$ and $\widehat{Q}'$ contain $x'Py'$ as a subpath. On the other hand, \Cref{max_geod} 
 implies that $\widehat{Q}$ and $\widehat{Q}'$ are vertex disjoint, a contradiction.
\end{proof}

\vspace{-1.3mm}

\begin{figure}[htb]
	\centering
	\includegraphics[scale=0.9]{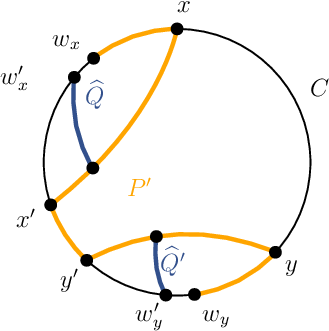}
	\caption{An illustration to the proof of \Cref{conv_geod}.}
	\label{fig_lemma5}
\end{figure}

\begin{Lemma} \label{conv_lens} 
	For every $x,y \in V(C)$ such that $\dist_C(x,y)> \ell-2$, there exists a $C$-\convex{} $(x,y; \ell)$-\lens.
\end{Lemma}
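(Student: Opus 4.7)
The plan is to produce two $C$-\convex{} $x,y$-paths, each of length at most $\ell-1$ and internally vertex-disjoint, whose union will be the desired \lens.

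The construction uses \Cref{max_geod}. I would pick neighbors $x' \in N_C(x)$ and $y' \in N_C(y)$, and let $Q$ and $Q'$ be shortest paths in $G$ from $x'$ to $y$ and from $x$ to $y'$, respectively. By \Cref{cl0} they have length at most $\ell-2$, and by \Cref{conv_geod} they are $C$-\convex{}. If $x',y'$ can be chosen so that $xx' \notin E(Q)$ and $yy' \notin E(Q')$, then \Cref{max_geod} gives that $Q$ and $Q'$ are vertex-disjoint. In that case, set $P_2 := xx'Qy$ and $P_3 := xQ'y'y$. Each is $C$-\convex{}, because prepending or appending a single edge of $C$ to a $C$-\convex{} path preserves $C$-\convex{}ness, and each has length at most $\ell-1$. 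The excluded-edge conditions also force $x \notin V(Q)$ and $y \notin V(Q')$ by a short argument using that $Q,Q'$ are shortest (if $x \in V(Q)$, then the subpaths $x'Qx$ and $xQy$ have lengths at least $1$ and $\dist_G(x,y)$ respectively, forcing $\|Q\|=1+\dist_G(x,y)$ and hence that the first edge of $Q$ is $xx'$). This together with $V(Q)\cap V(Q')=\emptyset$ yields $V(P_2)\cap V(P_3)=\{x,y\}$, so $P_2 \cup P_3$ is a cycle of length at most $2\ell-2$ passing through $x$ and $y$ and split into two $C$-\convex{} halves, that is, a $C$-\convex{} $(x,y;\ell)$-\lens.

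The main obstacle is arranging the choice of $x',y'$ so that the excluded-edge conditions hold. The driving observation is: whenever a $C$-neighbor $x'$ of $x$ satisfies $\dist_G(x',y) \le \dist_G(x,y)$, every shortest $x',y$-path avoids $xx'$, since using $xx'$ would force the length to be at least $1 + \dist_G(x,y) > \dist_G(x',y)$. Such an $x'$ exists as soon as some shortest $x,y$-path in $G$ leaves $x$ via a $C$-edge: its second vertex is the desired $x'$. Symmetric reasoning applies to $y'$. The delicate subcase is when every shortest $x,y$-path leaves $x$ (and enters $y$) via an off-$C$ edge; by \Cref{conv_geod} this subcase forces such a shortest path $P$ to coincide with its single ear from $x$ to $y$, giving very rigid structural information. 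Here I would exploit the maximality of $G$ — for instance, attempting to add an edge from a $C$-neighbor $x_i$ of $x$ to $y$ inside the disc bounded by $C$ must create a cycle of length less than $\ell$ not using $xx_i$ — to exhibit the needed alternative shortest $x',y$-path avoiding $xx'$, or, failing that, to bypass \Cref{max_geod} altogether and build the \lens{} directly from $P$ together with a short portion of the arc of $C$ on the appropriate side.
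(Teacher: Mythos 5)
Your mechanism --- producing the lens as a union of two vertex-disjoint, $C$-convex paths obtained from shortest paths via \Cref{max_geod} --- matches the spirit of the paper's Case~2, but you skip the paper's key device. The paper first replaces a shortest $x,y$-path $P$ by an \emph{inclusion-maximal} shortest path $P'$ between two vertices of $C$ containing $P$ as a subpath; it then takes $C$-neighbours $w_x',w_y'$ of the \emph{new} endpoints $w_x,w_y$ of $P'$, chosen off $P'$. Maximality of $P'$ is exactly what delivers the excluded-edge hypotheses of \Cref{max_geod}: a shortest $w_x',w_y$-path using $w_xw_x'$ would produce a shortest path properly containing $P'$, and the remaining exclusions follow from the internal structure of the rerouted path $\widehat Q=w_x'QzP'w_y$, which contains the $P'$-edge at $w_y$ and hence avoids $w_yw_y'$. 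By choosing $x',y'$ as $C$-neighbours of $x,y$ directly, you give up this lever, and the difficulties it was designed to absorb resurface as your ``delicate subcase.''

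Concretely there are two gaps. First, even in your good case you verify only two of the four hypotheses of \Cref{max_geod}: you argue $xx'\notin E(Q)$ and $yy'\notin E(Q')$, but the lemma also requires $xx'\notin E(Q')$ and $yy'\notin E(Q)$, and the driving observation yields neither --- a shortest $x,y'$-path can perfectly well begin with $xx'$ whenever $\dist_G(x',y')=\dist_G(x,y')-1$, which your conditions do not preclude. Second, the delicate subcase is not closed, and both sketched repairs fail: maximality guarantees a cycle of length less than $\ell$ through a new edge $x_iy$, but nothing rules out that this cycle uses $xx_i$, so it does not produce an alternative shortest $x_i,y$-path avoiding $xx_i$; and since $\dist_C(x,y)>\ell-2$ is the standing hypothesis, \emph{both} arcs of $C$ between $x$ and $y$ are long, so there is no ``short portion of the arc of $C$'' to glue onto $P$, and $P$ together with an arc is in any case a single cycle, not a lens split into two $C$-convex halves. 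Note also that the paper still needs a separate Case~1 --- where the rerouted shortest path avoids one half of $P'$, \Cref{max_geod} cannot be applied, and the lens is instead built from half of $P'$, a piece of $Q$, and a single $C$-edge; your plan has no analogue of this case. The clean way forward is the paper's maximal-extension route.
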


\begin{proof}
	
	Let $P$ be a shortest $x,y$-path. By  \Cref{cl0},  $\|P\| \leq \ell-2$, and thus  $P$  is not a segment of $C$. \Cref{conv_geod} implies that $P$ is $C$-\convex{}, and thus for some vertices $x'$ and $y'$ on $P$, both $xPx'$ and $y'Py$ are segments of $C$ and $x'Py'$ is an ear of $C$. Note that $x'$ and $y'$ may coincide with $x$ and $y$, respectively. There are two possible cases depending on whether the set $\{x',y'\}$  separates $x$ and $y$ in $C$ or not, see \Cref{fig_lemma_51}. However, for our argument, they are treated the same way.
	 
	Consider a family of shortest $w,w'$-paths in $G$, over all possible $w, w'\in V(C)$. Let $\cF$ be its subfamily consisting of those paths that contain $P$ as a subpath. In particular, for any $P'\in \cF$ with endpoints $w, w'$, $\|P'\| = \dist_G(w, w')$.   This family $\cF$ is non-empty since it  contains $P$. Let $P'$ be a maximal element from $\cF$ ordered by inclusion. By \Cref{conv_geod}, $P'$ is $C$-\convex{}. Since $x'P'y'=x'Py'$ is an ear of $C$, all the other vertices and edges of $P'$ are in $C$.  Let $w_x$ and $w_y$ be the endpoints of $P'$, where $w_x$ is closer to $x$ than to $y$ on $P'$, see \Cref{fig_lemma_51}.
	Note that $w_x$ and $w_y$ may coincide with $x$ and $y$, respectively.
	
	\begin{figure}[htb]
		\centering
		\begin{subfigure}[b]{.49\linewidth}
			\centering
			\includegraphics[scale=0.9]{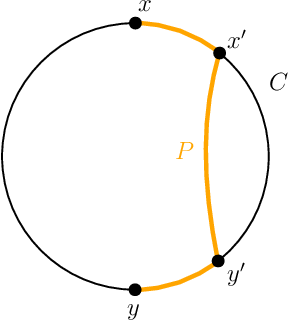}
			\captionsetup{justification=centering}
		\end{subfigure}
		\begin{subfigure}[b]{.49\linewidth}
			\centering
			\includegraphics[scale=0.9]{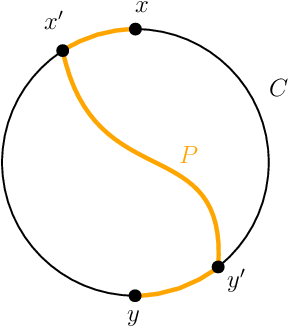}
			\captionsetup{justification=centering}
		\end{subfigure}
		\captionsetup{justification=centering}
		\caption{The set $\{x',y'\}$ can separate $x$ and $y$ in $C$ (left) or not (right).}
		\label{fig_lemma_51}
	\end{figure}
	
	Let $w_x'$ be a vertex adjacent to $w_x$ on $C$ such that $w_xw_x'$ is not an edge of $P'$. Observe that $w_x' \not\in V(P')$ since otherwise the $w_x,w_y$-path $w_xw_x'P'w_y$ is shorter than $P'$, a contradiction. Similarly, let $w_y'$ be a vertex adjacent to $w_y$ on $C$ such that $w_yw_y'$ is not an edge of $P'$, and observe that $w_y' \not\in V(P')$. Further, let $Q$ be a shortest $w_x',w_y$-path and $Q'$ be a shortest $w_y',w_x$-path. By \Cref{conv_geod}, both $Q$ and $Q'$ are $C$-\convex{}. 



	
	\vspace{2.5mm}
	
	\noindent
	{\it Case 1.  $V(Q)\cap V(w_xP'y) \subseteq \{y\}$ or  $V(Q')\cap V(w_yP'x)\subseteq \{x\}$. }
	
	\vspace{2mm}
	
	Note that this case could happen only if $\{x', y'\}$ does not separate $x$ and $y$ in $C$, see \Cref{fig_lemma_60}. If $V(Q)\cap V(w_xP'y) \subseteq \{y\}$, denote the first point of $yP'w_y$, counting from $y$, that lies on $Q$ by $z$, see \Cref{fig_lemma_61}. Observe that $z=y$ if and only if $V(Q)\cap V(w_xP'y) = \{y\}$. One can see that $C'=w_xP'zQw_x'w_x$ is a cycle of length $\|C'\|\le \|P'\|+\|Q\|+1\le 2\length-3,$ where the last inequality is by \Cref{cl0}. Hence, $C'$ is a $C$-\convex{} $x,y$-\lens, as desired. The situation when $V(Q')\cap V(w_yP'x)\subseteq \{x\}$ is symmetric.
	
	\begin{figure}[htb]
		\centering
		\begin{subfigure}[b]{.49\linewidth}
			\centering
			\includegraphics[scale=0.9]{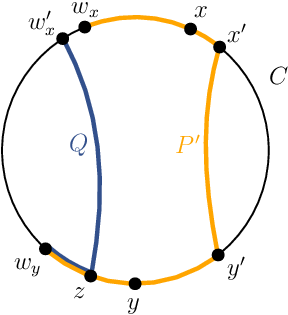}
			\captionsetup{justification=centering}
			\caption{If $V(Q)\cap V(w_xP'y) \subseteq \{y\}$, then \\ $w_xP'zQw_x'w_x$ is a $C$-\convex{} $x,y$-\lens.}
			\label{fig_lemma_61}
		\end{subfigure}
		\begin{subfigure}[b]{.49\linewidth}
			\centering
			\includegraphics[scale=0.9]{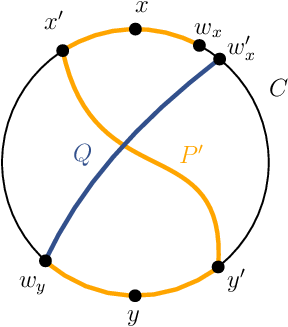}
			\captionsetup{justification=centering}
			\caption{If $\{x',y'\}$ separates $x$ and $y$ on $C$, \\ then  $V(Q)\cap V(w_xP'y) \not \subseteq \{y\}$.}
			\label{fig_lemma_60}
		\end{subfigure}
		\captionsetup{justification=centering}
		\caption{An illustration to Case 1.}
		\label{fig_lemma_52}
	\end{figure}
	
	\vspace{2.5mm}
	
	\noindent
	{\it  Case 2.  $V(Q)\cap V(w_xP'y) \not\subseteq \{y\}$ and  $V(Q')\cap V(w_yP'x) \not\subseteq \{x\}$.}
	
	\vspace{2mm}
	
	Let $z$ be the first vertex of $w_xP'y$, counting from $w_x$, that lies on $Q$. Observe that $z \neq $ Let $\widehat{Q}= w_x' Q zP'w_y$. \Cref{path_mix}  implies that $\widehat{Q}$ is a shortest $w_x',w_y$-path. Besides, $w_xw_x' \not\in E(\widehat{Q})$ by maximality of $P'$. Recall that $z \in V(w_xP'y) \sm \{y\}$ by our assumption, and thus $z \neq w_y$. Therefore, the path $\widehat{Q}$ contains the edge of $P'$ adjacent to $w_y$, and thus $w_yw_y' \not\in E(\widehat{Q})$. Similarly, let $z'$ be the first vertex of $w_yP'x$, counting from $w_y$, that lies on $Q'$. Then $\widehat{Q}'=w_y' Q' z'P'w_x$ is a shortest $w_y',w_x$-path and $w_xw_x', w_yw_y' \notin E(\widehat{Q}')$. Now \Cref{max_geod} applied to $\widehat{Q}$ and $\widehat{Q}'$ implies that these two paths are vertex-disjoint.  Note that this can happen only if  $\{x', y'\}$ separates $w_x'$ and $w_y'$ on $C$, see \Cref{fig_lemma_615}. Thus $C'=w_xw_x'\widehat{Q}w_yw_y'\widehat{Q}'w_x$ is a cycle of length $\|C'\|= \|\widehat{Q}\|+\|\widehat{Q}'\|+2\le 2\length-2,$ where the last inequality is by \Cref{cl0}, see \Cref{fig_lemma_62}. In addition, both $\widehat{Q}$ and $\widehat{Q}'$ are $C$-\convex{} by \Cref{conv_geod}, and thus $C'$ is a desired $C$-\convex{} $x,y$-\lens.	
\end{proof}

\begin{figure}[htb]
	\centering
	\begin{subfigure}[b]{.49\linewidth}
		\centering
		\includegraphics[scale=0.9]{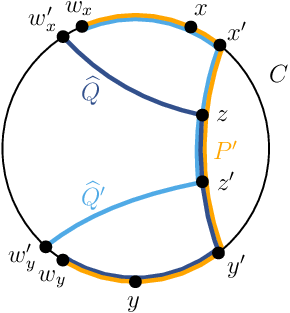}
		\captionsetup{justification=centering}
		\caption{If $\{x',y'\}$ does not separate $w_x'$ and $w_y'$ on $C$, \\ then $\widehat{Q}$ and $\widehat{Q}'$ cannot be vertex-disjoint.}
		\label{fig_lemma_615}
	\end{subfigure}
	\begin{subfigure}[b]{.49\linewidth}
		\centering
		\includegraphics[scale=0.9]{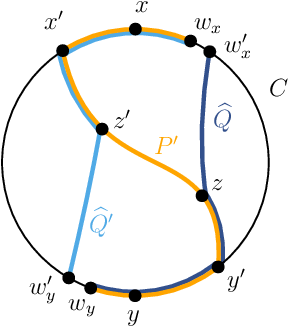}
		\captionsetup{justification=centering}
		\caption{If $\widehat{Q}$ and $\widehat{Q}'$ are vertex-disjoint, then \\ $w_xw_x'\widehat{Q}w_yw_y'\widehat{Q}'w_x$ is a $C$-\convex{} $x,y$-\lens.}
		\label{fig_lemma_62}
	\end{subfigure}
	\captionsetup{justification=centering}
	\caption{An illustration to Case 2.}
	\label{fig_lemma_53}
\end{figure}

\begin{Lemma} \label{convex_ineq}
	Let  $x,y$ be antipodal vertices of $C$ and $z$ be a center of a segment of $C$ with endpoints $x$ and $y$.  Let $C'$ be a $C$-\convex{} $(x,y; \ell)$-\lens. If $z \in V(C')$,  then $\|C\|\leq 8\ell- 13$.
\end{Lemma}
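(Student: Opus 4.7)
The plan is to exploit the structure of $C'$ as a union of two $C$-convex $x,y$-paths. Write $C' = P' \cup P''$ with $V(P') \cap V(P'') = \{x, y\}$, so that $\|P'\| + \|P''\| = \|C'\| \leq 2\ell - 2$. Since $z \in V(C')$ but $z \notin \{x, y\}$, I may assume $z \in V(P')$. Let $p = \|P'\|$; as $\|P''\| \geq 1$, we have $p \leq 2\ell - 3$. Denote by $A_1$ the segment of $C$ from $x$ to $y$ with $z$ as its center, split by $z$ into two halves $A_1'$ (from $x$ to $z$) and $A_1''$ (from $z$ to $y$), and let $A_2$ be the other $x,y$-segment of $C$.

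The central step is to establish $|A_1| \leq 2p - 1$ unless $P'$ is a segment of $C$, in which case $|A_1| = p$ directly. I would carry out a case analysis on the $C$-convex structure of $P'$. If $P'$ is itself a segment of $C$, then since it contains $z$ one must have $P' = A_1$ and $|A_1| = p$. Otherwise $P' = S_1 \cup E \cup S_2$, with $S_1, S_2$ (possibly trivial) segments of $C$, $E$ an ear of length at least $1$, and $x_1, y_1$ the endpoints of $E$. As the interior of $E$ is off $C$, the vertex $z$ lies in $V(S_1) \cup V(S_2) \cup \{x_1, y_1\}$. If $z \in V(S_1)$, the fact that $S_1$ is an internal subpath of $P'$ forces $y \notin V(S_1)$. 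Hence the segment $S_1$, going from $x$ to $x_1$ on $C$ and passing through $z$, must traverse $C$ from $x$ in the $A_1$-direction and contain the entire sub-arc $A_1'$, which yields $|A_1'| \leq |S_1| \leq p - |E| \leq p - 1$. By a symmetric argument, $z \in V(S_2)$ or $z = y_1$ yields $|A_1''| \leq p - 1$. Thus in every sub-case, $\min(|A_1'|, |A_1''|) \leq p - 1$. Since the centrality of $z$ implies that $|A_1'|$ and $|A_1''|$ differ by at most $1$, this yields $|A_1| = |A_1'| + |A_1''| \leq 2p - 1$.

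To finish, the antipodality of $x, y$ implies that $|A_1|$ and $|A_2|$ also differ by at most $1$, so $\|C\| = |A_1| + |A_2| \leq 2|A_1| + 1 \leq 4p - 1 \leq 4(2\ell - 3) - 1 = 8\ell - 13$. In the segment case, $\|C\| \leq 2p + 1 \leq 4\ell - 5$, which is even smaller.

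The main subtlety will be the case analysis on where $z$ sits within the $C$-convex decomposition of $P'$, in particular the step showing $S_1 \supseteq A_1'$: one must rule out $S_1$ wrapping the long way around $C$ through $A_2$ (which would force $S_1$ to contain $y$), using crucially that $y$ cannot be an interior vertex of the subpath $S_1$ of $P'$.
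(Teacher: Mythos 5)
Your proof is correct and follows essentially the same approach as the paper's: use the $C$-convexity of $P'$ to argue that a prefix or suffix of $P'$ containing $z$ is a segment of $C$, and then bound $\|C\|$ via the centrality of $z$ and the antipodality of $x,y$. The paper's version is more compact — it simply observes that WLOG $xP'z$ is a segment of $C$, then chains $\|C\| \le 4\dist_C(x,z)+3 \le 4(\|P'\|-1)+3 \le 4(\|C'\|-2)+3 \le 8\ell-13$ — whereas you spell out the same content through the explicit $S_1\cup E\cup S_2$ decomposition and the bound $|A_1| \le 2p-1$.
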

\begin{proof}
	Let $x$ and $y$ split $C'$ into two $C$-\convex{} $x,y$-paths $P'$ and $P''$, so that $z \in V(P')$.  
	Note that either $xP'z$ or $yP'z$ is a segment of $C$ since $P'$ is $C$-\convex{}. Assume without loss of generality that $xP'z$  is a segment of $C$.   Then 
	$\|C\| \leq  4\dist_C(x,z) +3  \le  4(\|P'\|-1)+3  \le 4(\|C'\|-2)+3 \le 4(2\length- 2-2)+3= 8\ell -13.$\qedhere
\end{proof}

\vspace{0mm}

\begin{proof}[Proof of Theorem~\ref{thm1} for $\length\geq 7$] \

\vspace{2mm}

 {\it Upper bound.} Note that if $\|C\|< 2\ell$, then there is nothing to prove. So we assume that $\|C\|\ge 2\ell$. Let $w_1,w_3$ and $w_2,w_4$ be two pairs of antipodal vertices of $C$ that split $C$ into four segments of almost  equal lengths. Let $C'$ be a $C$-\convex{} $(w_1,w_3; \ell)$-\lens, and $C''$ be a $C$-\convex{} $(w_2,w_4; \ell)$-\lens, which exist by \Cref{conv_lens}. 
 Let $Q_2, Q_4$ be $w_1,w_3$-paths  such that $C'=Q_2\cup Q_4$  and  $Q_1, Q_3$ be $w_2,w_4$-paths such that $C''=Q_1\cup Q_3$.
	Moreover, assume that $w_i$ is {closer in $G$} to $Q_i$ than to $Q_{i+2}$, $i\in [4]$, {index addition modulo $4$}, see Figure \ref{fig_2lens}.

If $w_i\in V(Q_i)$ for some $i\in [4]$, then $\|C\| \le 8\length-13$ by \Cref{convex_ineq}, as desired. So from now one, we assume that $w_i\not\in V(Q_i)$ for all $i\in [4]$. 
 
	By planarity, $Q_i$ intersects $Q_{i+1}$, $i\in [4]$. 
	When $|V(Q_i)\cap V(Q_{i+1})|=1$ for each $i\in [4]$, we see that $Q_1\cup Q_2 \cup Q_3 \cup Q_4$ is a union of four edge-disjoint cycles of total length $\|C'\|+\|C''\| \le 4\ell -4$, see Figure~\ref{fig_2lens}, left. Then one of the cycles has length less than $\ell$, a contradiction. 
	
	In general, recalling that $w_i\not\in V(Q_i)$ for all $i\in [4]$, let $u_i$ be the first vertex in $Q_{i}$, counting from $w_{i-1}$, that is in  $Q_{i-1}$, see \Cref{fig_2lens}, right. 
	Consider non-trivial closed walks $U_i = w_iQ_{i+1}u_{i+1}Q_iu_iQ_{i-1}w_i$, $i\in [4]$. Note that each $Q_i$ is split into three paths by $u_i$ and $u_{i+1}$, and each of these three paths is contained in exactly one of the $U_i$'s. Therefore, $\sum_{i=1}^{4}\|U_i\|=\sum_{i=1}^{4}\|Q_i\| =\|C'\|+\|C''\| \le4\length-4,$
	and thus one of the non-trivial closed walks $U_i$ contains a cycle of length less than $\length$, a contradiction again.
	
 
 \begin{figure}[htb]
 	\centering
 	\begin{subfigure}[b]{.49\linewidth}
 		\centering
 		\includegraphics[scale=0.90]{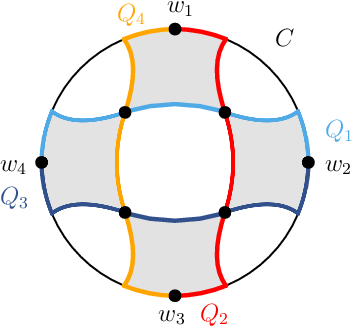}
 	\end{subfigure}
 	\begin{subfigure}[b]{.49\linewidth}
 		\centering
 		\includegraphics[scale=0.90]{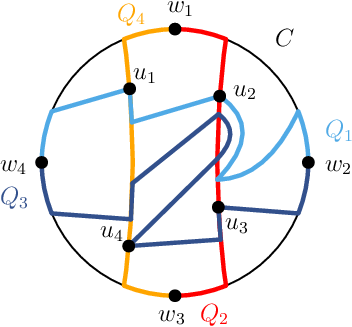}
 	\end{subfigure}
 	\caption{An illustration to the proof of \Cref{thm1}.}
 	\label{fig_2lens}
 \end{figure}

\vspace{2mm}

{\it Lower bound.} Consider the graph $W(\length)$ that is a subdivided wheel with three spokes of length $2$ each, two segment of length $\length-4$, and the third segment of length $\length-3$, see \Cref{fig1}, left. Observe that any two non-adjacent vertices of $W(\length)$ belong to some cycle of length either $2\length-4$ or $2\length-3$, and thus adding an edge between them creates a cycle of length less than $\length$. Hence, $W(\length)$ is a maximal $\CC$-free plane graph. Therefore, we have $\fm(\length)\ge \fm(W(\length))=3\length-11$, as claimed.

If $\length=7,8$, or $9$, consider a different construction $W'(\length)$, that is an edge-disjoint union of $C_9$ and $C_{3\length-9}$ that share three vertices equidistant on each of the cycles, see Figure~\ref{fig1}, right. Any two non-adjacent vertices of $W'(\length)$ belong to a  cycle of length $2\length-3$, $\length$, or $9$. Hence, $W'(\length)$ is a maximal $\CC$-free plane graph, and so $\fm(\length)\ge \fm(W'(\length))=3\length-9$ for $\length=7,8,9$, as claimed.
\end{proof}

\begin{figure}[htb]
	\centering
	\begin{subfigure}[b]{.49\linewidth}
		\centering
		\includegraphics[scale=0.60]{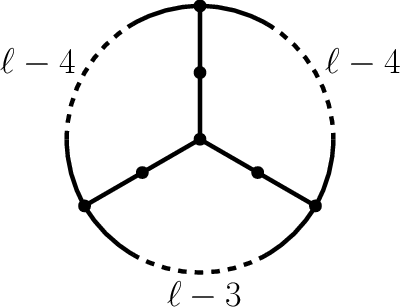}
	\end{subfigure}
	\begin{subfigure}[b]{.49\linewidth}
		\centering
		\includegraphics[scale=0.60]{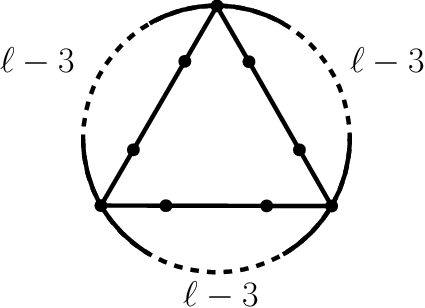}
	\end{subfigure}
	\caption{Maximal $\CC$-free plane graphs $W(\length)$ and $W'(\length)$.}
	\label{fig1}
\end{figure}

\section{Proof of Theorem~\ref{thm1} for $3\leq \length\leq 6$} \label{SA1}

In this section, we argue that $\fm(\length)=2\length-3$ if $3\le \length \le 6$.  Our proof relies on the following observation.

\begin{Lemma} \label{cl1}
	Let  $\length\geq 4$, $G$ be a \m{} graph, and $C$ be its facial cycle. If $x$ and $y$ are two non-consecutive  vertices of $C$, then $xy$ is not an edge of $G$.
\end{Lemma}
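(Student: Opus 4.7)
The plan is to argue by contradiction: suppose $xy \in E(G)$ with $x, y$ non-consecutive on $C$, and let $P_1, P_2$ denote the two $x, y$-arcs of $C$. Since each cycle $P_i \cup \{xy\}$ has length $\|P_i\| + 1$ and lies in the $\CC$-free graph $G$, the first observation will be that $\|P_i\| \ge \length - 1 \ge 3$ for $i = 1, 2$. In particular, each arc has at least two internal vertices, and $\|C\| \ge 2\length - 2$.

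Next, I would consider the vertex $z$ of $P_1$ adjacent to $x$. Since $\|P_1\| \ge 3$, $z$ is distinct from $y$ and non-consecutive to $y$ on $C$. If $zy$ were an edge of $G$, the triangle on $\{x,z,y\}$ (built from the $C$-edge $xz$, the hypothetical edge $zy$, and the chord $yx$) would violate $\CC$-freeness since $\length \ge 4$, so $zy \notin E(G)$. Symmetrically, $xz' \notin E(G)$ for the vertex $z'$ of $P_1$ adjacent to $y$, and more generally any short chord sharing an endpoint with $xy$ can be ruled out by a triangle or short-cycle argument.

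To reach the final contradiction, I would choose the chord $xy$ to minimize $\|P_1\|$ over all chords of $C$, which rules out any chord of $C$ with strictly smaller span, and combine this minimality with the bound $\dist_G(u,v)\leq\length-2$ from \Cref{cl0} and the maximality of $G$. The goal is to either produce a cycle of length less than $\length$ in $G$ (contradicting $\CC$-freeness) or to exhibit a pair of non-adjacent $C$-vertices $u, v$ with $\dist_G(u,v) \ge \length - 1$, so that $uv$ could be added inside the empty face $F$ without creating a crossing or a short cycle, contradicting maximality.

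The main obstacle will be this last step: the chord $xy$ supplies many length-two shortcuts (for instance $z \to x \to y$), which makes naive attempts to obtain short cycles or addable non-edges from \Cref{cl0} trivially compatible with both $\CC$-freeness and maximality. My plan to circumvent this is to exploit the minimality of $\|P_1\|$ (which forbids interfering chords such as $z_1z_2$ when $z_1, z_2$ are the $P_1$-neighbors of $x, y$, since such a chord together with the inner $P_1$-arc would form a cycle of length $\|P_1\|-1 = \length - 2 < \length$), and to analyze the structure of the two faces of $G$ incident to the chord $xy$ on the $P_1$-side to pin down the desired short cycle or addable non-edge.
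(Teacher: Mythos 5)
Your opening moves are correct (both arcs $P_1,P_2$ have length at least $\ell-1$, so $\|C\|\ge 2\ell-2$ and the neighbor $z$ of $x$ on $P_1$ cannot be joined to $y$), and you have correctly identified the target: produce two vertices of $C$ at graph distance at least $\ell-1$, contradicting \Cref{cl0}. But you stop short of the key idea, and the devices you propose to bridge the gap --- minimizing $\|P_1\|$ over all chords, ruling out interfering chords such as $z_1z_2$, and analyzing the faces incident to $xy$ --- do not lead anywhere obvious; you acknowledge as much.

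The missing ingredient is a planarity-based \emph{separation} property of the chord $xy$: since $C$ is a facial cycle and $xy$ is drawn inside the disc bounded by $C$ (or, dually, $C$ and $xy$ together bound two regions), any path in $G$ between an interior vertex of $P_1$ and an interior vertex of $P_2$ must pass through $x$ or $y$. This is what lets you add distances. The paper then chooses $x'\in V(P_1)$ with $\dist_C(x,x')=\lfloor\ell/2\rfloor$ and $y'\in V(P_2)$ with $\dist_C(y,y')=\lfloor\ell/2\rfloor$; a shortcut $x$--$x'$ in $G$ of length less than $\lfloor\ell/2\rfloor$, together with $xP_1x'$, would yield a cycle of length less than $\ell$, so $\dist_G(x,x')=\lfloor\ell/2\rfloor$, and similarly any $x',y$-path of length at most $\lceil\ell/2\rceil-2$ would close a short cycle through $xP_1x'$ and the edge $xy$, so $\dist_G(x',y)\ge\lceil\ell/2\rceil-1$; the symmetric bounds hold for $y'$. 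Since every $x',y'$-path goes through $x$ or $y$, these add up to $\dist_G(x',y')\ge\lfloor\ell/2\rfloor+\lceil\ell/2\rceil-1=\ell-1$, the contradiction with \Cref{cl0}. Your extremal choice of the chord and the face analysis are not needed once this separation argument is in place, and without it the proof is not complete.
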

\begin{proof}
	Assume that  $x$ and $y$ are adjacent. They split $C$ into two $x,y$-paths $P'$ and $P''$, each of length at least $\length-1$, since otherwise $G$ contains a cycle of length less than $\length$. Pick two vertices, $x'$ on $P'$ and $y'$ on $P''$, such that $\dist_C(x,x')=\floor{\length/2}=\dist_C(y,y')$. Note that $\dist(x,x')=\floor{\length/2}$, since otherwise the union of $xP'x'$ and a shortest $x,x'$-path in $G$ contains a cycle of length less than $\length$. Assume that there is an $x',y$-path $Q$ of length at most $\ceil{\length/2}-2$. Then $Q$ is shorter than $xP'x'$, and thus $xP'x'Qyx$ contains a cycle of length less than $\length$. This is a contradiction implying that $\dist(x',y)\ge \ceil{\length/2}-1$. Similarly, $\dist(y,y')=\floor{\length/2}$ and $\dist(x,y')\ge \ceil{\length/2}-1$. Since each $x',y'$-path contains either $x$ or $y$ by planarity, we conclude that $\dist(x',y')\ge \floor{\length/2} + \ceil{\length/2}-1 = \length-1$, which contradicts \Cref{cl0} and thus completes the proof.
\end{proof}

\vspace{0mm}

\begin{proof}[Proof of Theorem~\ref{thm1} for $3\leq \length\leq 6$] \

\vspace{2mm}

Recall that the lower bound is immediate by considering $C_{2\ell-3}$, so we proceed with the upper bound.

\vspace{2mm}

If $\length=3$, then  the family $\CC$ of forbidden cycles is empty and any maximal plane graph is a triangulation, so $\fm(3)=3$.

\vspace{2mm}

Let $\length=4$.
Let $C=u_0u_1\cdots u_ku_0$ be a facial cycle of length at least $6$ in a \m{} graph $G$. 
By Lemmas~\ref{cl0} and~\ref{cl1}, any two non-consecutive vertices of $C$ are at distance exactly 2 in $G$. By planarity,  $u_0,u_3$- and $u_1,u_4$-paths of length 2 must share their center vertex, say $w$. Then $u_0wu_1$ is a cycle of length $3$, a contradiction. Hence, $\fm(4)\le 5= 2\ell-3$.

\vspace{2mm}

Let  $\length=5$.
Let $C=u_0u_1\cdots u_ku_0$ be a facial cycle of length at least $8$ in a \m{} graph $G$. 
In this case,  $k\geq 7$ and every two non-consecutive vertices of $C$ are at distance either 2 or 3 in $G$ by Lemmas~\ref{cl0} and~\ref{cl1}. 

Assume first that $\dist(x,y)=3$ for all vertices $x, y$ of $C$ such that $\dist_C(x,y)\ge 3$. This implies that the shortest $u_0,u_4$- and $u_1,u_5$-paths $P$ and $P'$  have no inner vertices on $C$. By planarity, they share a vertex. Thus $u_0Pu_4u_5P'u_1u_0$ is an edge-disjoint union of two nontrivial closed walks of total length $8$. Thus there is a cycle of length at most $4$, a contradiction.

Now assume that $\dist(x,y)=2$ for some vertices $x, y$ of $C$ such that $\dist_C(x,y)\ge 3$. Assume without loss of generality that $\dist(u_2,u_j)=2$ for some $5 \le j \le \ceil{k/2}+2$, and thus $u_2wu_j$ is an ear of $C$ for some vertex $w$ by \Cref{cl1}, see \Cref{fig_l51}. Note that $\dist(u_0,u_2)=2$ and $\dist(u_0,w)\ge 2$, since otherwise $u_0w$ is an edge and $u_0u_1u_2wu_0$ is a cycle of length $4$. Similarly, $\dist(u_4,u_2)=2$ and $\dist(u_4,w)\ge 2$. In addition, $u_j$ and $u_0$ are not consecutive on $C$ since $j+1 \le \ceil{k/2}+3 \leq k$, and thus $\dist(u_0,u_j)\ge 2$. Since the shortest $u_0,u_4$-path of length at most $3$ contains one of the vertices $u_2,w,u_j$ by planarity, we conclude that this vertex is $u_j$ and $\dist(u_0,u_j)=2$, $\dist(u_4,u_j)=1$. The latter equality implies that $j=5$, and thus $\dist_C(u_0,u_j)>2$. Now the equality $\dist(u_0,u_j)=2$ and \Cref{cl1} imply that there exists a vertex $w'$ such that $u_0w'u_j$ is an ear of $C$. Moreover, $w'\neq w$ since $\dist(u_0,w)\geq 2$.

Repeating the argument from the previous paragraph verbatim replacing the vertices $u_7,u_6,u_5,u_3,u_2$ by $u_0, u_1, u_2, u_4, u_5=u_j$, respectively, we conclude that there exists a vertex $w''$ such that $u_7w''u_2$ is an ear of $C$. Moreover, one can see that two ears $u_0w'u_5$ and $u_7w''u_2$ must share their center vertex by planarity. In other words, $w'=w''$ and thus $u_2wu_5w'u_2$ is a cycle of length $4$, a contradiction, see \Cref{fig_l52}. Hence, $\fm(5)\le 7 = 2\ell-3$, as desired.

\vspace{2mm}

The case $\length=6$ follows from Lemma~5 in~\cite{AUW}. 
\end{proof}

\begin{figure}[htb]
	\centering
	\begin{subfigure}[b]{.49\linewidth}
		\centering
		\includegraphics[scale=0.9]{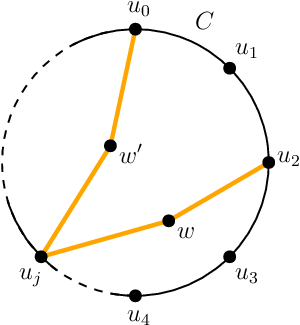}
		\caption{$u_2wu_j$ and $u_0w'u_j$ are ears of $C$.}
		\label{fig_l51}
	\end{subfigure}
	\begin{subfigure}[b]{.49\linewidth}
		\centering
		\includegraphics[scale=0.9]{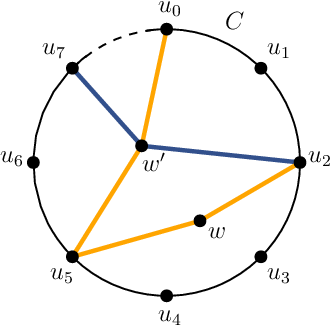}
		\caption{$u_2wu_5w'u_2$ is a cycle of length $4$.}
		\label{fig_l52}
	\end{subfigure}
	\captionsetup{justification=centering}
	\caption{An illustration to the case $\ell=5$.}
	\label{fig_l5}
\end{figure}

\section{Proof of Theorem~\ref{thm_orient_surf}} \label{Ssurf}

Throughout this section, let $g \ge 1$ and $\ell\ge 3$ be integers, $\Sigma \in \{\mathbb{S}_g, \N_g\}$, $G$ be a maximal $\CC$-free graph embedded on $\Sigma$, and $F$ be its face bounded by a cycle $C$. Whenever it does not cause confusion, we identify a graph and its embedding on $\Sigma$. In particular, we identify a path (or a cycle) and the corresponding simple (closed) curve on $\Sigma$. We shall use the following properties of the shortest paths in $G$.

\begin{Lemma} \label{center_avoiding}
	For any segment $S$ of $C$ of order $\ell$ and any $z\in V(G)$ there is $x\in V(S)$ such that $\dist(x,z)> \frac{\ell}{2}-1$.
\end{Lemma}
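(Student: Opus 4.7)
The plan is to argue by contradiction: assume that every vertex $x \in V(S)$ satisfies $\dist(x,z) \le \ell/2 - 1$, and extract a cycle of length strictly less than $\ell$. The key step is to invoke \Cref{Wwtree} with $W = V(S) \cup \{z\}$ and $w = z$, producing a $(W,z)$-tree $T \subseteq G$ in which, for each $u \in V(S)$, the unique $z,u$-path in $T$ is a shortest $z,u$-path in $G$ and thus has length at most $\ell/2 - 1$.

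Write $S = u_0 u_1 \cdots u_{\ell-1}$ and split into two cases according to whether $S$ sits inside $T$ edge-wise. If some edge $e = u_i u_{i+1}$ of $S$ is not in $T$, then concatenating $e$ with the tree paths from $u_{i+1}$ to $z$ and from $z$ to $u_i$ produces a closed walk of length at most $1 + 2(\ell/2 - 1) = \ell - 1$. Since $e \notin E(T)$, this edge is traversed with multiplicity exactly one in the walk, so the walk is non-trivial, and its underlying graph contains a cycle of length at most $\ell - 1 < \ell$, contradicting the $\CC$-freeness of $G$. Otherwise every edge of $S$ lies in $T$, so $S$ is the unique $u_0, u_{\ell-1}$-path inside the tree $T$, forcing $\dist_T(u_0, u_{\ell-1}) = \ell - 1$; yet the triangle inequality in $T$ gives $\dist_T(u_0, u_{\ell-1}) \le \dist_T(u_0, z) + \dist_T(z, u_{\ell-1}) \le 2(\ell/2 - 1) = \ell - 2$, a contradiction.

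I do not anticipate any serious obstacle: the crux is to notice that \Cref{Wwtree} gives simultaneous control over all shortest paths emanating from $z$ to vertices of $S$, after which the dichotomy ``$S$ is contained in $T$'' versus ``$S$ escapes $T$'' exactly matches the threshold $\ell/2 - 1$ in the statement. The only routine verification is the non-triviality of the short closed walk in the first case, which is immediate from the fact that the exhibited edge has odd multiplicity in it.
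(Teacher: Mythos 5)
Your proof is correct, and it takes a genuinely different route from the paper's. The paper works directly with an arbitrary choice of shortest $x_i,z$-paths $Q_i$ for $i\in[\ell]$: for each consecutive pair, the closed walk $x_iQ_izQ_{i+1}x_{i+1}x_i$ has length at most $\ell-1$ and so must be trivial, which forces $Q_i$ and $Q_{i+1}$ to nest (one extends the other by the edge $x_ix_{i+1}$); the authors then rule out a local maximum of $d(i)=\dist(x_i,z)$ and derive a contradiction from the resulting unimodality of $d(\cdot)$ at the two endpoints of $S$. You instead invoke \Cref{Wwtree} with $W=V(S)\cup\{z\}$ and $w=z$, which fixes a globally \emph{consistent} system of shortest $z$-paths (the tree $T$) once and for all, and then split on whether $S$ sits inside $T$. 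Your Case~1 mirrors the paper's triviality step (an edge of $S$ outside $T$ closes a short walk of odd multiplicity on that edge), but your Case~2 replaces the entire local-max/unimodality analysis by a one-line triangle inequality $\dist_T(u_0,u_{\ell-1})\le \dist_T(u_0,z)+\dist_T(z,u_{\ell-1})\le \ell-2$, contradicting $\dist_T(u_0,u_{\ell-1})=\ell-1$. The two arguments are comparable in length, but yours is arguably tidier: it reuses \Cref{Wwtree} (which the paper proves but does not apply in this lemma) to absorb all the path-compatibility bookkeeping, whereas the paper re-derives that compatibility by hand through the nesting and unimodality analysis.
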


\begin{proof}
	Let $S=x_1\cdots x_{\ell}$. Assume for the contrary that for each $i \in [\ell]$, $d(i) \coloneqq \dist_G(x_i,z) \le \frac{\ell}{2}-1$. For each $i \in [\ell]$, let $Q_i$ be a shortest $x_i,z$-path in $G$. Note that if $i<\ell$, then $x_iQ_izQ_{i+1}x_{i+1}x_i$ is a closed walk of length at most $2(\frac{\ell}{2}-1)+1 = \ell-1$. Hence, this walk is trivial, i.e., either $Q_i= x_ix_{i+1}Q_{i+1}z$ (in which case $d(i)=d(i+1)+1$)  or $Q_{i+1}= x_{i+1}x_iQ_iz$ (in which case $d(i+1)=d(i)+1$). Assume that there is a local maximum of $d(\cdot)$ at $i \in [\ell]$, i.e., that  $d(i-1)=d(i)-1=d(i+1)$ for some $i-1,i,i+1 \in [\ell]$. Then $Q_i= x_ix_{i-1}Q_{i-1}z = x_ix_{i+1}Q_{i+1}z$. This is a contradiction, since $Q_i$ has an endpoint $x_i$ and two edges incident to it. Therefore, the function $d(\cdot)$ has a unique minimum on $[\ell]$, it first strictly decreases and then strictly increases. However, this implies that $\ell = |[\ell]| \le d(1)+d(\ell)+1 \le 2(\frac{\ell}{2}-1)+1=\ell-1$, a contradiction.
\end{proof}

\begin{Lemma} \label{remote_vertex}
	Let $p,q \in V(C)$, and $\gamma$ be a simple curve on $F$ with endpoints $p, q$. Let $\gamma'$ be a simple curve on $\Sigma$ with endpoints $p, q$ that consists of vertex disjoint paths in $G$ with the endpoints on $C$ connected by simple curves on $F$. Suppose that the interiors of $\gamma$ and $\gamma'$ are disjoint and that $\Sigma\sm (\gamma \cup \gamma')$ consists of two connected components. Let $D$ be one of these components and suppose that $D\cup\gamma \cup \gamma'$ is homeomorphic to a closed disk. Let $S$ be a $p,q$-path in $C$ and suppose that, as a curve, $S$ lies in $D\cup \gamma'$. Let $X=V(S)$ and $Z=\gamma'\cap V(G)$. If $|X|\ge \ell|Z|$, then there exists $x\in X$ such that $\dist_G(x,Z)> \frac{\ell}{2}-1$.
\end{Lemma}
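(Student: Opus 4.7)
My plan is an argument by contradiction: suppose $\dist_G(x,Z) \le \tfrac{\ell}{2}-1$ for every $x \in X$. Enumerate $X=\{x_1,\ldots,x_N\}$ along $S$ from $p$ to $q$, with $N = |X| \ge \ell|Z|$, and for each $i$ fix a shortest $x_i,Z$-path $Q_i$ in $G$ ending at some $z_i \in Z$. The ultimate target is to produce $\ell$ consecutive indices $i, i+1, \ldots, i+\ell-1$ and a single $z^\ast \in Z$ with $\dist_G(x_j, z^\ast) \le \tfrac{\ell}{2}-1$ for all $j \in \{i, \ldots, i+\ell-1\}$; \Cref{center_avoiding} applied to this $\ell$-vertex segment of $C$ and the vertex $z^\ast$ then yields the contradiction.

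The first preparatory step is to confine each $Q_i$ to the closed sub-disk $\overline{D_{\gamma'}} \subseteq \overline{D}$ whose boundary is $S \cup \gamma'$. Since $D \cup \gamma \cup \gamma'$ is a closed disk and $S \subseteq D \cup \gamma'$, the chord $S$ splits this disk into two parts, and the part $\overline{D_\gamma}$ bordered by $S$ and $\gamma$ lies in the face $F$ and therefore contains no vertex or edge of $G$ off $S$. Consequently, $Q_i$ cannot enter the interior of $D_\gamma$; moreover it cannot leave $\overline{D}$ at all, because that would demand crossing $\gamma$ (forbidden, as $\gamma \subset F$) or $\gamma'$ (only possible at a vertex of $Z$, which would shortcut $Q_i$ and contradict its minimality). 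Thus $Q_i \subseteq \overline{D_{\gamma'}}$.

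With every path confined to this planar disk, I would perform an uncrossing. Suppose $i < j$ but $z_j$ precedes $z_i$ along $\gamma'$ (in the $p$-to-$q$ direction). Then the endpoints of $Q_i$ and $Q_j$ lie in crossed cyclic order on $\partial \overline{D_{\gamma'}} = S \cup \gamma'$, so by planarity the two paths share a common vertex $v$. The shortest-path property gives $\|Q_i\| = \dist(x_i,v)+\dist(v,z_i)$ and $\|Q_j\| = \dist(x_j,v)+\dist(v,z_j)$, while $\|Q_i\| \le \dist(x_i,z_j) \le \dist(x_i,v)+\dist(v,z_j)$ and the symmetric inequality holds for $j$. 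Summing these forces equality throughout, so $x_i Q_i v Q_j z_j$ and $x_j Q_j v Q_i z_i$ are themselves shortest $x_i,Z$- and $x_j,Z$-paths, and exchanging the targets $z_i \leftrightarrow z_j$ strictly decreases the number of inversions in $(z_1, \ldots, z_N)$. Iterating this uncrossing terminates in a monotone assignment $i \mapsto z_i$.

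Once $i \mapsto z_i$ is monotone, its fibers partition $\{1, \ldots, N\}$ into at most $|Z|$ blocks of consecutive indices, and pigeonhole applied to $N \ge \ell|Z|$ delivers a block of size at least $\ell$; its members index $\ell$ consecutive vertices of $S \subseteq C$, all within distance $\tfrac{\ell}{2}-1$ of a common $z^\ast \in Z$, yielding the desired contradiction via \Cref{center_avoiding}. The delicate ingredient is the topological claim that two crossed-endpoint arcs $Q_i, Q_j$ inside $\overline{D_{\gamma'}}$ must share a graph vertex, together with the inductive maintenance of the arcs' confinement to $\overline{D_{\gamma'}}$ throughout the uncrossing; both rest on the disk structure afforded by the hypothesis on $D \cup \gamma \cup \gamma'$ and on the absence of $G$-edges in the face $F$.
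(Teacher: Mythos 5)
Your proof is correct and follows the same overall strategy as the paper's: confine each shortest $x_i,Z$-path to the closed sub-disk bounded by $S\cup\gamma'$, arrange a weakly monotone assignment $i\mapsto z_i$ of nearest vertices of $Z$, and then apply the pigeonhole principle together with \Cref{center_avoiding} to an $\ell$-long block of consecutive $x_i$'s sharing a common nearest vertex $z^\ast$.

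Where you diverge is in how monotonicity is produced. The paper fixes a canonical nearest vertex $z(i)$ --- the one of smallest index along $\gamma'$ --- and proves $z(\cdot)$ non-decreasing by a local argument on \emph{consecutive} indices $i,i+1$: planarity forces $Q_i$ and $Q_{i+1}$ to meet, the girth bound then forces the edge $x_ix_{i+1}$ to lie on one of the two paths (otherwise the short closed walk through the meeting vertex would contain a short cycle), and in either case the tie-break minimality of $z(i)$ is violated. You instead take arbitrary nearest-point assignments and globally uncross inverted pairs $(i,j)$ via the shortest-path exchange argument, using the strictly decreasing inversion count for termination. Your variant is arguably a bit cleaner at this step in that it does not need the girth bound --- only the shortest-path property and planar crossing --- at the price of a global termination argument instead of a local one. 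Both correctly note that the exchanged paths remain shortest $x_\cdot,Z$-paths inside $\overline{D_{\gamma'}}$, so the iteration and the subsequent pigeonhole step are sound. The delicate points you flagged (crossed-endpoint arcs in the disk must share a graph vertex; confinement is preserved under uncrossing) are indeed the right ones to check, and your handling of them is correct.
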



\begin{proof}
	Assume the contrary, namely that $\dist_G(x,Z) \le \frac{\ell}{2}-1$ for all $x \in X$. Label the vertices in $X$ by  $x_1, \ldots , x_{k}$ in their relative order on $S$, and label the vertices in $Z$ by $z_1,\dots , z_{m}$ in their relative order on $\gamma'$ such that $x_1=z_1=p, x_k=z_m=q$. We order the vertices of $Z$ according to their indices, i.e., we  say that $z_j<z_{j'}$ if $j<j'$. Let
	\begin{align*}
		d(i)&=\dist(x_i, Z) = \min_{j\in [m]} \dist(x_i,z_j) \hspace{6mm}  \mbox{ and } \\
		z(i)&= z_j, \ \   \mbox{ where }  j = \min\{j':  d(i)= \dist(x_i, z_{j'})\},
	\end{align*}
	i.e., $z(i)$ is the smallest vertex from $Z$ such that $\dist(x_i, Z)= \dist(x_i, z(i))$. Let $Q_i$ be a shortest $x_i,z(i)$-path in $G$, see \Cref{fig_L10}. Recall that the interior of $\gamma$ is in $F$ and thus it contains no vertices of $G$. Hence, each $Q_i$ lies in $D$, since a path in $G$ that starts on $S$ can leave the connected component $D$ only through the set $Z$ on its boundary.
	
	Now we show that $z(\cdot)$ is non-decreasing. Assume for the contrary that $z(i+1)<z(i)$ for some $i$. Since the edges of $Q_i$ and $Q_{i+1}$ are in $D\sm F$,  by planarity, $Q_i$ and $Q_{i+1}$ share a vertex. Thus either $x_ix_{i+1} \in E(Q_i)$ or $x_{i+1}x_{i} \in E(Q_{i+1})$, since otherwise the walk $z(i)Q_ix_ix_{i+1}Q_{i+1}z(i+1)$ of length less than $\ell$ contains a cycle.
	
	
	
	If $x_ix_{i+1} \in E(Q_i)$, then $\|x_iQ_iz(i)\|= 1+\|x_{i+1}Q_iz(i)\| \ge 1+\|x_{i+1}Q_{i+1}z(i+1)\| = \|x_ix_{i+1}Q_{i+1}z(i+1)\|$ which contradicts the definition of $z(i)$, since $z(i+1)<z(i)$.
	
	Similarly, if $x_{i+1}x_{i} \in E(Q_{i+1})$, then $\|x_iQ_{i+1}z(i+1)\|=\|x_{i+1}Q_{i+1}z(i+1)\|-1\le \|x_{i+1}x_{i}Q_{i}z(i)\|-1=\|x_{i}Q_{i}z(i)\|$ which again contradicts the definition $z(i)$, since $z(i+1)<z(i)$.
	
	These contradictions imply that $z(\cdot)$ is indeed non-decreasing. Hence, for each $z \in Z$, the set
	\begin{equation*}
		I(z) \coloneqq \{i \in [k]: z(i)=z\}
	\end{equation*}
	is a (possibly empty) interval of consecutive integers, and thus $|I(z)|<\ell$ by \Cref{center_avoiding}. Therefore, $|X|= \sum_{z \in Z}|I(z)| < \ell|Z|$, which contradicts the condition $|X|\ge \ell|Z|$ and completes the proof.
\end{proof}

\begin{figure}[htb]
	\centering
	\begin{subfigure}[b]{.49\linewidth}
		\centering
		\includegraphics[scale=0.9]{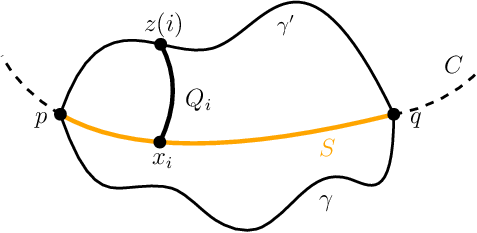}
		\captionsetup{justification=centering}
		\caption{$z(i)$ is the closest to $x_i$ point on $Z$ \\ and $Q_i$ is a shortest $x_i,z(i)$-path.}
		\label{fig_L10_1}
	\end{subfigure}
	\begin{subfigure}[b]{.49\linewidth}
		\centering
		\includegraphics[scale=0.9]{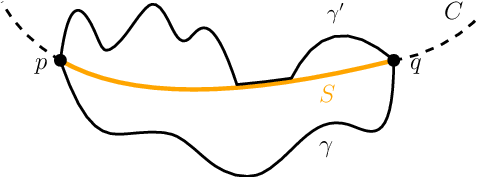}
		\captionsetup{justification=centering}
		\caption{Note that the part of $D$ bounded between \\ $S$ and $\gamma'$ is not necessarily connected.}
		\label{fig_L10_2}
	\end{subfigure}
	\captionsetup{justification=centering}
	\caption{An illustration to the proof of \Cref{remote_vertex}.}
	\label{fig_L10}
\end{figure}

\begin{Lemma} \label{2discs}
	If a graph $H$ with $v$ vertices and $e$ edges is embedded on $\Sigma \in \{\mathbb{S}_g, \N_g\}$, then at least $e-v-2g+2$ faces of $H$ are homeomorphic to open discs.
\end{Lemma}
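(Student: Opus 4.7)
The plan is to express the Euler characteristic of $\Sigma$ as a sum over the faces of $H$ and exploit the fact that open discs contribute strictly more to this sum than any other connected open surface. Since $\chi(\mathbb{S}_g)=2-2g$ and $\chi(\N_g)=2-g\geq 2-2g$, it suffices to show that the number $f_d$ of disc faces of $H$ satisfies $f_d\geq e-v+\chi(\Sigma)$.

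First I would refine the embedding of $H$ to a triangulation $T$ of $\Sigma$ by adding vertices and edges strictly inside each face of $H$ so that every face of $T$ is an open triangle. This refined embedding is cellular, so ordinary Euler's formula gives $V-E+F_T=\chi(\Sigma)$, where $V$, $E$, and $F_T$ count the vertices, edges, and triangles of $T$. I would then split
\[
V=v+\sum_i V_i^\circ,\qquad E=e+\sum_i E_i^\circ,\qquad F_T=\sum_i T_i^\circ,
\]
according to which face $F_i$ of $H$ each new vertex, each new edge, and each triangle lies in. Substituting gives
\[
\chi(\Sigma)=(v-e)+\sum_{i}\bigl(V_i^\circ-E_i^\circ+T_i^\circ\bigr)=(v-e)+\sum_{i}\chi(F_i),
\]
where the second equality holds because the triple $(V_i^\circ,E_i^\circ,T_i^\circ)$ forms a CW decomposition of the open surface $F_i$.

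Finally, I would invoke the classification of connected open surfaces to assert that $\chi(F_i)\leq 1$, with equality if and only if $F_i$ is an open disc (every other connected open subsurface either has positive genus or more than one end, both forcing $\chi\le 0$). Plugging this inequality into the displayed identity yields $\chi(\Sigma)\leq (v-e)+f_d$, which rearranges to $f_d\geq e-v+\chi(\Sigma)\geq e-v-2g+2$, as required.

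The main obstacle is the topological bookkeeping in the middle step: one must produce a triangulation of $\Sigma$ that extends the embedded $H$, and verify that the Euler characteristic of the open surface $F_i$ computed from the induced CW refinement agrees with its topological invariant. Both facts are standard and can be pulled from \cite{MT01}, but a fully rigorous write-up should pause on these points before invoking the classification of open surfaces.
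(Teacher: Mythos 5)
Your proof is correct, and it takes a genuinely different route from the paper's. The paper modifies $H$ rather than refining it: into each face that is not an open disc it inserts one new edge (or loop) joining boundary vertices of $H$, drawn so as not to separate that face --- possible because a non-disc face either has a disconnected boundary or contains a handle or crosscap, by the classification theorem --- and then applies the non-cellular Euler inequality $v - \|H'\| + f \ge 2-2g$ to the resulting multigraph $H'$, which still has $f$ faces but $f-f'$ extra edges. You instead extend $H$ to a triangulation, use the exact Euler formula, and localize the Euler characteristic face by face, closing with the fact that a connected open subsurface of $\Sigma$ has $\chi\le 1$ with equality precisely for the disc. Both arguments ultimately invoke the classification of surfaces at the same spot, just to opposite ends (the paper to find a non-separating arc, you to bound $\chi(F_i)$). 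One small imprecision worth flagging: the open cells of your triangulation contained in $F_i$ do not literally form a CW decomposition of $F_i$, since an open edge may have its endpoints on $H$ and hence its attaching map exits $F_i$; the identity $V_i^\circ - E_i^\circ + T_i^\circ = \chi(F_i)$ is really a statement about the compactly-supported Euler characteristic $\chi_c$, which is additive over partitions into locally closed cells, combined with the fact that $\chi_c$ and the ordinary homotopy-invariant $\chi$ coincide for $2$-manifolds. You flag this as the place needing care, which is exactly right. On balance, the paper's route is a bit more elementary and self-contained, while yours is conceptually cleaner and makes transparent why disc faces are extremal.
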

\begin{proof}
	Let $f$ be the number of faces of $H$ and $f'$ be the number of those faces that are homeomorphic to open discs. Observe that if a face $F'$ is \textit{not} homeomorphic to an open disc, then we can connect two of the vertices of $H$ on the boundary of $F'$ by a curve $c$ inside $F'$ such that $F'\sm c$ is connected. Indeed, if the boundary of $F'$ consists of at least 2 connected components, take 2 vertices of $H$ on different components, and connect them by a simple curve in $F'$. Otherwise, if the boundary of $F'$ is connected, but $F'$ is not homeomorphic to an open disc, then the classification theorem, see e.g. \cite[Theorem~3.1.3]{MT01}, implies that there is a handle or a cross-cap in the interior of $F'$. In this case, we pick a vertex on the boundary of $F'$ and draw a loop $c$ in $F'$ through this handle or a cross-cap such that $F'\sm c$ is connected.
	
	Let $H'$ be a multigraph obtained from $H$ by adding such an edge in each of the $f-f'$ faces that are not homeomorphic to discs. Note that $H'$ has $v$ vertices, $e+f-f'$ edges, and $f$ faces by construction. By Euler's formula, see e.g.~\cite[Section~3.1]{MT01}, we have $v-(e+f-f')+f \ge 2-2g$, or, equivalently, $f'\ge e-v-2g+2$, as desired. 
\end{proof}

\vspace{0mm}

\begin{proof}[Proof of Theorem \ref{thm_orient_surf}]\
	
\vspace{2mm}

{\it Upper bound.} 	
Recall that $g \ge 1$ and $\ell\ge 3$ are integers, $\Sigma \in \{\mathbb{S}_g, \N_g\}$, $G$ is a maximal $\CC$-free graph embedded on $\Sigma$, and $F$ is its face bounded by a cycle $C$. Assume for the contrary that $\|C\|>24(2g+1)\ell^2$.

We begin by outlining the proof strategy. We define a suitable constant $s=s(g)$. 
First we construct an auxiliary graph $G'$ embedded on $\Sigma$  as follows. We consider a set $W$ of $s+1$ vertices on $C$ that are  almost equidistant  and  take a subgraph of $G$ forming a tree $T$ with leaf-set  contained in  $W$.  We consider a  new cycle $C'$ on $s+1$ vertices  that is embedded in $F$ very close to $C$ and a matching $M$ between the vertices of  $C'$ and $W$. Let $T'$ be the tree obtained from $T\cup M$ by `dissolving' vertices of degree 2.  Then define the graph $G'$ to be $C'\cup T'$. Next, we argue that there are two faces $D_1, D_2$ of $G'$ such that each of them, together with its boundary, is homeomorphic to a closed disc. For each $i\in \{1,2\}$, we prove that there is a vertex $x_i\in V(C)$ in $D_i$ such that its distance to the set $Z_i$ of vertices of $G$ on the boundary of $D_i$ satisfies $\dist_G(x_i,Z_i)> \frac{\ell}{2}-1$. Since any shortest $x_1,x_2$-path  in $G$  passes through the set $Z_1$ and then through the set $Z_2$, we conclude that $\dist_G(x_1,x_2)> \ell -2$. This contradiction to \Cref{cl0} completes the proof of the upper bound $\|C\|\le 24(2g+1)\ell^2$.

Now we start a formal proof. 

\vspace{2mm}
{\it Step 1:  constructing the auxiliary graph $G'$.}
Let $s = 6(2g+1)-1$.  Let $W=\{w_0, \ldots, w_s\}$ be a set of  $s+1$ vertices on $C$ that appear in the order of their indices on $C$ and split this cycle into $s+1$ segments, each containing at least $\|C\|/(s+1)$ vertices. Let $T$ be a $(W,w_0)$-tree that exists by \Cref{Wwtree}.


Consider a cycle $C'$ on $s+1$ vertices labeled $w_0', \ldots, w_s'$ in order and embed $C'$ in the face $F$ sufficiently close to the cycle $C$ such that $F\sm C'$ consists of two connected components and, moreover, the component that is bounded between $C$ and $C'$ is homeomorphic to an annulus. The existence of such an embedding follows from the tubular neighborhood theorem, see \cite[Theorem~5.2]{H76}. Consider a matching $M$ with edges   $w_iw_i'$, $0\leq i\leq s$, and  draw the edges of $M$  in the annulus between $C$ and $C'$ such that the respective $s+1$ curves are pairwise disjoint, see \Cref{fig_G}, middle.

Let $T'$ be the tree obtained by `dissolving' degree 2 vertices in the tree $T\cup M$. In other words, let $T'$ be a unique tree without degree 2 vertices embedded on $\Sigma$ such that $T\cup M$ is a subdivision of $T'$ and, as a curve, each edge of $T'$ is a union of some edges of $T\cup M$. Note that $T'$ has precisely $s+1$ leaves  $w_i'$, $0\leq i\leq s$.

Now, define the graph $G'$ to be $T'\cup C'$ and  fix its embedding on $\Sigma$ as described, see \Cref{fig_G}, right.

\vspace{2mm}
{\it Step 2:  defining faces $D_1$ and $D_2$  of $G'$.}
Let $\cF'$ be the set of all faces of $G'$ except for the face $F'$ bounded by $C'$.  
Observe that $|E(G')|-|V(G')|= (s+1)+|E(T')|-|V(T')| = s$. Hence, the number $f'$ of faces of $G'$ that are homeomorphic to an open disc satisfies $f'\ge s-2g+2$ by \Cref{2discs}.
As usual, let the \textit{degree} of a face be the number of edges on its boundary counted with multiplicities. For  $k\in \N$, let $f_k$ be the number of  faces from $\cF'$ of degree $k$ and let $f_{\ge 6} \coloneqq \sum_{k\ge 6} f_k$. The handshaking lemma implies that $6f_{\ge 6} \le \sum_{k \in \N} kf_k = 2|E(G')|-(s+1) = 2(s+1+|E(T')|)-(s+1) \le 5s-1$, where the last inequality holds because a tree with $s+1$ leaves and without vertices of degree $2$ has at most $2s-1$ edges. Therefore, $f_{\ge 6} \le (5s-1)/6$.
Let $\cF\subseteq \cF'$ be the set of faces homeomorphic to an open disc and that have degree less than $6$. Summarising the inequalities above, we conclude that $|\cF| \ge f'-f_{\ge 6}-1\ge \frac{s+1}{6}-2g+1=2$.

Note that if a face in $\cF$ is not bounded by a cycle, then at least one vertex appears at least twice on its boundary walk and splits this walk into two closed sub-walks. Since the length of the boundary walk is at most $5$, one of these sub-walks has length at most $2$. 
Observe that a sub-walk of length $1$ corresponds to a loop, and a sub-walk of length $2$ corresponds to a pendant edge or to parallel edges in $G'$.  However, $G'$ is a simple graph without pendant edges by construction. Thus all faces from $\cF$ are bounded by cycles. Fix any two of them and call these faces $D_1$ and $D_2$, see \Cref{fig_G}, right.

\vspace{2mm}
{\it Step 3: finding remote vertices $x_1$ and $x_2$ on $C$.}  
For $0 \le i \le s$, let $w_iCw_{i+1}$ be the segment of $C$ with the endpoints $w_i$ and $w_{i+1}$ that does not contain other vertices $w_j$, $j \neq i, i+1$. Here and for the rest of the proof, we treat indices modulo $s+1$. 

Since $T'$ is a tree, the boundary cycle of $D_1$ contains an edge $w'_jw'_{j+1}$ of $C'$ for some $0\le j \le s$. We claim that this boundary cycle contains neither $w'_{j-1}w'_j$ nor $w'_{j+1}w'_{j+2}$. Indeed, if, say, the boundary cycle of $D_1$ contains $w'_{j+1}w'_{j+2}$, then $w'_jw'_{j+1}w'_{j+2}$ is a subpath of the boundary cycles of both $D_1$ and the face $F'$ bounded by $C'$, and thus the degree of $w'_{j+1}$ in $G'$ equals 2, a contradiction. In particular, this implies that the curve $\gamma_1$ corresponding to the path $w_{j}w'_{j}w'_{j+1}w_{j+1}$, i.e., formed by the segment $w'_{j}w'_{j+1}$ of $C'$ and the edges $w_{j}w'_{j}, w_{j+1}w'_{j+1}$ of $M$, lies on the boundary of $D_1$. Let $\gamma_1'$ be the curve with the endpoints $w_j$ and $w_{j+1}$ that, together with $\gamma_1$, forms the boundary of $D_1$. Note that the open region of the annulus between $C$ and $C'$ bounded by $\gamma_1$ and $S_1\coloneqq w_{j}Cw_{j+1}$ is disjoint from $G$, and thus, as a curve, $S_1$ lies in $D_1\cup\gamma_1'$.

Let $X_1 = V(S_1)$ and $Z_1=\gamma_1'\cap V(G)$. We claim that $|X_1|\ge \ell|Z_1|$. Indeed, on the one hand, recall that $|X_1| \ge \|C\|/(s+1) \ge 4\ell^2$ by construction. On the other hand, since $D_1 \in \cF$, the set $Z_1$ is contained in the union of at most $4$ edges of $T'$. Each of these edges is contained in some $w_0,w$-path, $w \in W$, in a $(W,w_0)$-tree $T$, which in turn contains at most $\ell-1$ vertices of $G$ by \Cref{cl0}. Hence, $|Z_1|< 4\ell \le |X_1|/\ell$, as claimed. Now \Cref{remote_vertex} applied to $w_{j}, w_{j+1}$ and $\gamma_1, \gamma_1', D_1, S_1, X_1, Z_1$ playing the roles of $p, q$ and $\gamma, \gamma', D, S, X, Z$, respectively, 
implies that there exists $x_1 \in X_1$ such that $\dist_G(x_1,Z_1)> \frac{\ell}{2}-1$. 

Define $\gamma_2, \gamma_2', S_2, X_2, Z_2$ for the face $D_2$ in a similar fashion, and apply \Cref{remote_vertex} again to find a vertex $x_2 \in X_2$ such that $\dist_G(x_2,Z_2)> \frac{\ell}{2}-1$.

By \Cref{cl0}, there exists an $x_1,x_2$-path $P$ in $G$ of length at most $\ell-2$. On the other hand, $P$ intersects both $Z_1$ and $Z_2$ since $x_1$ and $x_2$ lie in faces with disjoint interiors $D_1$ and $D_2$, respectively, and since every path in $G$ that starts in $D_1$ (resp., in $D_2$) can leave $D_1$ (resp., $D_2$) only through the set $Z_1$ (resp., $Z_2$) on its boundary. In other words, there exist (not necessarily distinct) vertices $y_1 \in V(P)\cap Z_1$, $y_2 \in V(P)\cap Z_2$ such that $P=x_1Py_1Py_2Px_2$. Therefore, $\|P\|\ge \dist(x_1,y_1)+\dist(x_2,y_2) \ge \dist(x_1,Z_1)+\dist(x_2,Z_2)> 2(\frac{\ell}{2}-1)= \ell-2$. This contradiction to \Cref{cl0} completes the proof of the desired upper bound $\|C\|\le 24(2g+1)\ell^2$.

\begin{figure}[htb]
	\centering
	\hspace{-10mm}
	\raisebox{2mm}{
	\begin{subfigure}[c]{.3\linewidth}
		\centering
		\includegraphics[scale=0.8]{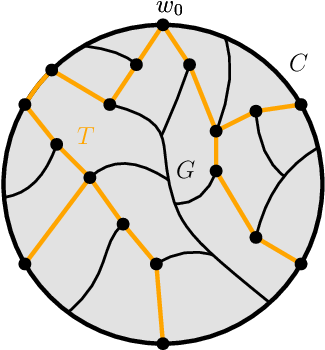}
		\label{fig_G1}
	\end{subfigure}
	}
	\hspace{-3mm}
	\begin{subfigure}[c]{.3\linewidth}
		\centering
		\includegraphics[scale=0.8]{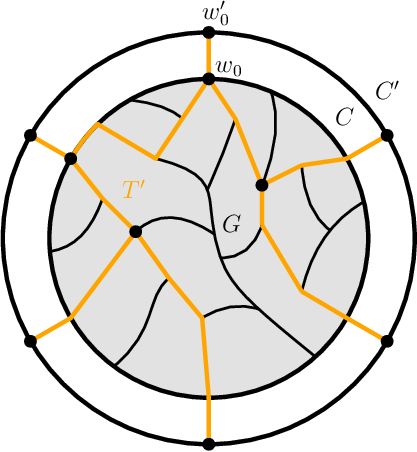}
		\label{fig_G2}
	\end{subfigure}
	\hspace{8mm}
	\begin{subfigure}[c]{.3\linewidth}
		\includegraphics[scale=0.8]{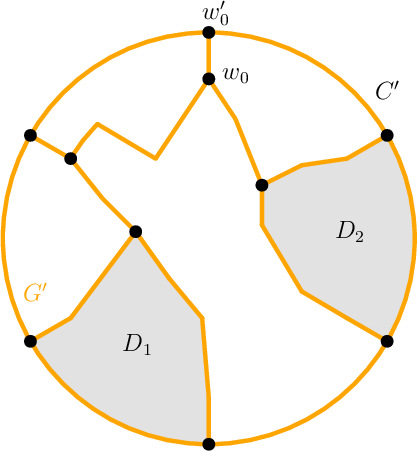}
		\label{fig_G3}
	\end{subfigure}
	\captionsetup{justification=centering}
	\caption{Auxiliary graphs $T$ (left), $T'$ (middle), and $G'$ (right).}
	\label{fig_G}
\end{figure}

\vspace{2mm}

{\it Lower bound.}
If $\ell \le 5$, then there is nothing to prove. Assume that $\ell \ge 6$ and  let $m=\ell-4$. Consider a subdivided wheel with $g+1$ spokes of length $1$ and $g+1$ segments of length $2m$ formed by a cycle $C$ and a star $T$. Let $T'$ be a star with $g+1$ leaves such that its center is not a vertex of the wheel and its leaves are central vertices of the $g+1$ segments of the wheel. Call the union of these graphs $G$.

Sketch  $G$ on the plane such that $C$ is a circle, the centers of $T$ and $T'$ lie inside $C$ and their edges are straight line segments, as shown in  \Cref{fig_LB1}. 
Note that in this sketch of $G$, precisely $g$ edges of $T'$ cross $T$. For each of these edges, we take sufficiently small circles around two of its inner points, one before the first crossing and another one after the last crossing, replace two discs bounded by these circles with a handle, and redirect the inner part of the edge along this handle. The resulting drawing of $G$ on $\mathbb{S}_g$ is crossing-free, see~\cite[Section~2]{M09}. Moreover, the cycle $C$ of length $2(g+1)m$ is still a facial cycle of our drawing corresponding to the `outer' face.



First, we observe that every cycle in $G$ must contain at least two edge-disjoint  segments of $C$ of length $m$ and at least two additional edges. In other words, $G$ contains no cycles of length less than $2m+2 \ge \ell$.

Second, we verify the maximality. Let $x,y$ be two vertices of $C$. Note that if  $\dist_C(x,y) \le m$, then adding the edge $xy$ to $G$ creates a cycle of length at most $m+1<\ell$, and there is nothing to prove. Hence, we assume without loss of generality that $\dist_C(x,y) > m$. Let $v_x$ and $w_x$ be the vertices of $T'$ and $T$, respectively, closest to $x$ on $C$. Define $v_y$ and $w_y$ in a similar way. Note that $v_x$ and $w_x $ may coincide with $v_y$ and $w_y$, respectively, but not simultaneously because $\dist_C(x,y) > m$.  Let $Q_1$ be an $x,y$-path formed by the union of the shortest $x,v_x$- and $y,v_y$-paths on $C$ with the $v_x,v_y$-path on $T'$. Similarly, let $Q_2$ be an $x,y$-path formed by the union of the shortest $x,w_x$- and $y,w_y$-paths on $C$ with the $w_x,w_y$-path on $T$, see \Cref{fig_LB2}. Observe that the union of $Q_1$ and $Q_2$ is a cycle and its length 
is at most $2m+4 \le 2\ell-3.$
Hence, adding the edge $xy$ to $G$ creates a cycle of length less than $\ell$. Therefore, $G$ is a maximal $\CC$-free graph embedded on $\mathbb{S}_g$. Now we conclude that $\fm(\ell,\mathbb{S}_g)\ge \|C\| = (2g+2)m$, which is even stronger than the desired lower bound.

For nonorientable surfaces, we cut off one or two small discs inside the face $F$ and replace them with crosscaps. Since the resulting surface is homeomorphic to $\N_{2g+1}$ or $\N_{2g+2}$, respectively, see e.g. \cite[Theorem~3.1.3]{MT01}, we conclude that $\fm(\ell,\mathbb{N}_{2g+1}),\ \fm(\ell,\mathbb{N}_{2g+2})\ge \|C\| = (2g+2)m$. It remains only to note that for $g=1,2$, the desired inequality $\fm(\ell,\mathbb{N}_{g})\ge g(\ell-5)$ follows from the lower bound $\fm(\ell,\mathbb{N}_{g})\ge 2\ell-3$ discussed in the introduction.
\end{proof}

\begin{figure}[htb]
	\centering
	\begin{subfigure}[b]{.49\linewidth}
		\centering
		\includegraphics[scale=0.9]{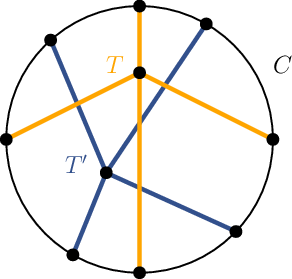}
		\caption{A drawing of $G$ on the plane in case $g=3$.}
		\label{fig_LB1}
	\end{subfigure}
	\begin{subfigure}[b]{.49\linewidth}
		\centering
		\includegraphics[scale=0.9]{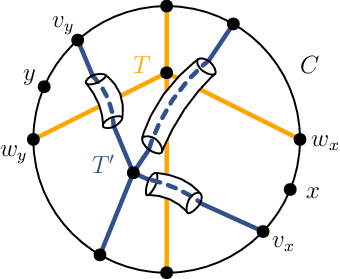}
		\caption{A drawing of $G$ on $\mathbb{S}_3$ without crossings.}
		\label{fig_LB2}
	\end{subfigure}
	\captionsetup{justification=centering}
	\caption{An illustration to the lower bound.}
	\label{fig_LB}
\end{figure}

\section{Concluding remarks} \label{S3}

We showed that the maximum length $\fm(\length, \plane)$ of a facial cycle of an inclusion-maximal plane graph with girth at least $\length$ satisfies $3\length-11 \le \fm(\length, \plane) \le 8\length-13$. We would like to pose the following question.

\begin{Question}
	Is it true that $\fm(\length, \plane) = 3\length + o(\length)$?
\end{Question}

When $\ell \ge 6$ and the plane is replaced with an arbitrary closed surface $\Sigma$ of genus $g\ge 1$, we showed that $g(\ell-4) \le \fm(\length, \Sigma) \le 24(2g+1)\ell^2$. In fact, our proof gives slightly better bounds depending on the orientability of $\Sigma$: $\fm(\length, \mathbb{N}_g) \le 24(g+1)\ell^2$ and $\fm(\length, \mathbb{S}_g) \ge (2g+2)(\ell-4)$. By replacing one of the trees with a path in our construction, we can also show that $\fm(\length, \mathbb{N}_g) \ge (2g+2)(\ell-g-2)$ for $\ell \ge 2g+3$.

Note that if $\ell \ge 6$ is fixed while $g$ tends to infinity, then $\fm(\ell, \Sigma)=\Theta(g)$ for every closed surface $\Sigma$ of genus $g$. We can also show that $\fm(3, \Sigma)=\Theta(\sqrt{g})$ as $g$ growth, by relating the value of $\fm(3, \Sigma)$ to the size of the largest clique that can be embedded on $\Sigma$. It may be interesting to determine the growth rate of $\fm(\ell, \Sigma)$ for the remaining values $\ell=4,5$ as well.

In the regime when $\Sigma$ is fixed while $\ell$ tends to infinity, we know that $\Omega(\ell) =\fm(\ell, \Sigma) = O(\ell^2)$. We believe that $\fm(\ell, \Sigma)=O(\ell)$ and wonder if the argument from \Cref{S2} or from \cite{PZ25} can be generalized to show that. Perhaps, the function $\fm(\ell, \Sigma)$ is even linear in both parameters.

\begin{Question}
	Does there exist an absolute constant $C>0$ such that $\fm(\length, \Sigma) \le Cg\ell$ for each closed surface $\Sigma$ of genus $g\ge 1$ and for each integer $\ell\ge 3$?
\end{Question}

We remark that it was not originally obvious for us that $\fm(\ell, \Sigma)$ is bounded by any function of $\ell$ even in the simplest case when $\Sigma$ is a plane. Our argument in \Cref{SA} is basically the shortest proof of the inequality $\fm(\ell, \Sigma)< \infty$ we have. It might be interesting  to find a shorter argument.

\vspace{2mm}
Most  Tur\'an-type problems have their saturation counterparts, where the goal is to \textit{minimize} the number of edges in an inclusion-maximal $\cF$-free graph, see the survey~\cite{faudree2011survey} by Faudree, Faudree, and Schmitt. For the special case when $\cF$ is a family of cycles, see~\cite{demidovich2023cycle, furedi2013cycle, korandi2017saturation}. The study of planar saturation numbers has been recently initiated by Clifton and Salia~\cite{clifton2024saturated}, see also~\cite{barat2025number}. Note that if $\cF=\CC$ with $\length>3$, then every \m{} contains at least $n-1$ edges, which is tight as witnessed by stars. However, if we consider only $2$-connected plane graphs, i.e. such graphs that all their faces are bounded by cycles, then the problems becomes less trivial.

\begin{Question}
	What is the minimum number ${\rm sat}_{\mathcal{P}}^{2{\rm\mbox{-}con}}(n,\CC)$ of edges  in a 2-connected \m{} graph on $n$ vertices?
\end{Question}

\noindent
A direct application of Euler's formula yields that ${\rm sat}_{\mathcal{P}}^{2{\rm\mbox{-}con}}(n,\CC) \ge (1-2/\fm(\length, \plane))^{-1}(n-2)$ for all $n\ge \length \ge 3$. It would be interesting to improve this lower bound asymptotically.

\vspace{2mm}
Note that $\fm(\length, \Sigma)$ is defined as the maximum length of a facial cycle of an inclusion-maximal graph with girth \textit{at least $\length$} embedded on $\Sigma$, while one could ask for a variant of this problem for graphs with girth \textit{exactly $\length$}. We claim that these two problems have the same answer. Indeed, consider a graph $G$ with girth at least $\length$ embedded on $\Sigma$ with a face $F$ bounded by a cycle of length $\fm(\length, \Sigma)$. Draw a cycle of length $\length$ in any face of $G$ but $F$ and add edges arbitrarily until the resulting graph $G'$ is an inclusion-maximal $\CC$-free graph. It is easy to see that $\fm(G')\ge\fm(G)$ since $F$ is still a face of $G'$, while the girth of $G'$ equals $\length$, as desired.

\vspace{2mm}
Finally, let us note that the relations between lengths of facial cycles in plane graphs and their other parameters including radius or diameter were also considered, see Ali, Dankelmann, and Mukwembi~\cite{ADM} and Du Preez~\cite{P}, respectively. See also a paper by Fern\'andez, Sieger, and Tait \cite{FST} on planar subgraphs of given girth in planar graphs.

\vspace{3mm}

\noindent
{\bf \large Acknowledgements.}
Research was supported in part by the DFG grant FKZ AX 93/2-1.

{\small }

\appendix

\section{Upper bound on an arbitrary closed surface: a shorter proof} \label{SA}

Let $g\ge 1$ and $\ell\geq 3$ be integers and $\Sigma$ be a surface, $\Sigma \in \{\mathbb{S}_g, \N_g\}$. In this section, we provide an alternative argument to show that $\fm(\length, \Sigma)$ is finite. The resulting upper bound is worse than the one given by \Cref{thm_orient_surf}, but the proof is much shorter.

\begin{Theorem}\label{thm_gen_surf}
	Let $g\ge 1$ and $\ell\geq 3$  be  integers  and $\Sigma$ be a surface,  $\Sigma \in \{\mathbb{S}_g, \N_g\}$. Then
	\begin{equation*}
		\fm(\length, \Sigma) \le \big((4g+4)^2\ell\big)^{\ell}.
	\end{equation*}
\end{Theorem}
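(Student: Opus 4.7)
The plan is to upper bound $N := \|C\|$, where $C$ is the facial cycle of a maximal $\CC$-free graph $G$ embedded on $\Sigma$, by an iterative argument that tracks how many vertices of $V(C)$ lie at each BFS level from a fixed vertex of $C$, applying \Cref{2discs} at every step.

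First I would fix $w_0 \in V(C)$ and set $L_i := \{v \in V(C) : \dist_G(w_0, v) = i\}$ for $i = 0, 1, \ldots, \ell-2$. By \Cref{cl0}, the sets $L_0, \ldots, L_{\ell-2}$ partition $V(C)$, so $N = \sum_{i=0}^{\ell-2} |L_i|$. Writing $M := (4g+4)^2\ell$, I plan to prove by induction on $i$ that $|L_i| \le M^i$. Since $M \ge \ell-1$ whenever $g \ge 1$, this will yield $N \le (\ell-1)M^{\ell-2} \le M^\ell = \big((4g+4)^2\ell\big)^\ell$, as required.

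For the inductive step, assume $|L_i| \le M^i$ and consider $L_{i+1}$. I would invoke \Cref{Wwtree} to select a $(\{w_0\} \cup L_i \cup L_{i+1},\, w_0)$-tree $T_i \subseteq G$. By construction, $T_i$ has depth at most $i+1 \le \ell-2$, all its leaves lie in $L_i \cup L_{i+1}$, and its number of vertices and edges is controlled linearly in $(\ell-2)(|L_i| + |L_{i+1}|)$. I would then form the embedded auxiliary graph $H_i := T_i \cup C$ on $\Sigma$ and apply \Cref{2discs}, so that $H_i$ has at least $|E(H_i)| - |V(H_i)| - 2g + 2$ disc faces. The boundary of any disc face of $H_i$ is a closed walk in $H_i \subseteq G$, and a non-trivial such walk must contain a cycle in $G$, which by the girth hypothesis has length at least $\ell$. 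Summing boundary lengths over all disc faces and comparing with $2|E(H_i)|$ should give an inequality that, after rearrangement, yields $|L_{i+1}| \le M \cdot |L_i|$, closing the induction.

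The hard part will be the inductive step: translating the face-counting inequality into the precise multiplicative bound $M = (4g+4)^2\ell$. The $(4g+4)^2$ factor should emerge from the $-2g+2$ correction term in \Cref{2discs} together with the edge- and leaf-overheads of $T_i$, while the $\ell$ factor should come from the depth of $T_i$ and from the $\ge\ell$ boundary length of disc faces. A secondary difficulty is handling disc faces of $H_i$ whose boundary walk is \emph{trivial} (for instance, a walk tracing pendant tree edges back and forth) or passes through a repeated vertex without enclosing a cycle; such faces contribute less than $\ell$ to the boundary-length sum and must be bounded separately, purely in terms of the tree structure of $T_i$.
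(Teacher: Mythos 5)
Your proposal takes a route that is genuinely different from both of the paper's arguments (the main proof of Theorem~\ref{thm_orient_surf} and the appendix proof of Theorem~\ref{thm_gen_surf}), but there is a gap in the inductive step that I do not think can be repaired as stated.

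The central problem is circularity. You apply Lemma~\ref{2discs} to $H_i := T_i \cup C$, which contains the entire cycle $C$. Thus $|E(H_i)| \ge N := \|C\|$, which is precisely what you are trying to bound, and the face-degree sum $\sum_F \deg(F) = 2|E(H_i)|$ also contains a contribution of $N$ from the face $F$ bounded by $C$ and another (roughly) $N$ from the other side of those same edges. When you write the inequality $\ell \cdot (\text{number of nontrivial disc faces}) \le 2|E(H_i)|$, the terms involving $N$ appear on the right-hand side with the wrong sign to be cancelled, and after rearrangement you either get a tautology or a \emph{lower} bound on $N$ — not an upper bound. Moreover, there is no mechanism in Lemma~\ref{2discs} (or in the Euler-type counting behind it) that treats $L_i$ and $L_{i+1}$ asymmetrically: the disc-face count you get is roughly $|L_i| + |L_{i+1}| - O(g)$, which gives you no handle on $|L_{i+1}|$ in terms of $|L_i|$ alone, so the multiplicative recursion $|L_{i+1}| \le M|L_i|$ does not emerge. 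This is precisely why the paper does something subtler. In Theorem~\ref{thm_orient_surf} the face-counting lemma is applied not to $T \cup C$ but to a \emph{bounded-size} auxiliary graph $G' = T' \cup C'$ containing a parallel copy $C'$ of $C$ on only $s+1 = 6(2g+1)$ vertices and a dissolved tree $T'$; this graph has $O(g)$ edges independently of $\|C\|$, so the counting is not circular, and one extracts two low-degree disc faces and finishes via Lemma~\ref{remote_vertex} and a contradiction with Lemma~\ref{cl0}. The appendix proof of Theorem~\ref{thm_gen_surf} itself does not use Lemma~\ref{2discs} at all: it bounds $\|C\|$ by finding a high-degree vertex in a $(W,w_0)$-tree, extracting a spider $S_1$ and a pseudo-spider $S_2$, and using Ringel's nonembeddability of $K_{3,4g+3}$ on $\Sigma$ (Lemma~\ref{spiders_cross}) together with a pigeonhole count along the legs of $S_2$. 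If you want to pursue a BFS-level induction, you would need a replacement for Lemma~\ref{2discs} that can be applied to a graph of size independent of $\|C\|$, or a tool like Lemma~\ref{remote_vertex} that converts topological separation into a lower bound on graph distance.
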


Our proof utilizes the celebrated Ringel's theorem, see e.g. \cite[Theorem~4.4.7]{MT01}, which gives a necessary and sufficient condition for the existence of an embedding of a complete bipartite graph on a closed surface.

\begin{Theorem}[Ringel \cite{Rin1965a,Rin1965b}] \label{Rin}
	Let  $s,t \ge 3$ and $g\ge 0$ be intergers. Then $K_{s,t}$ can be embedded on $\mathbb{S}_g$ if and only if $g \ge \ceil{\frac{(s-2)(t-2)}{4}}$. Moreover, $K_{s,t}$ can be embedded on $\N_g$ if and only if $g \ge \ceil{\frac{(s-2)(t-2)}{2}}$.	
\end{Theorem}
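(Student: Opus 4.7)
The plan is to prove the theorem by contradiction: assuming $\|C\| > ((4g+4)^2\ell)^{\ell}$, I will find a $K_{s,t}$-minor in $G$ with $s=t=4g+4$. Since $(s-2)(t-2)=(4g+2)^2$ exceeds both $4g$ and $2g$ for every $g\ge 1$, Theorem~\ref{Rin} yields that $K_{s,t}$ embeds on neither $\mathbb{S}_g$ nor $\N_g$. As surface embeddability is preserved under taking minors, this contradicts $G\subseteq \Sigma$.

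To locate the minor, I would fix $w_0\in V(C)$ and apply Lemma~\ref{Wwtree} with $W=V(C)$ to obtain a tree $T\subseteq G$ rooted at $w_0$, all of whose leaves lie on $V(C)$. By Lemma~\ref{cl0}, every vertex of $V(C)$ is within distance $\ell-2$ of $w_0$ in $G$, so $T$ has depth at most $\ell-2$ and contains all $\|C\|$ vertices on $C$. Writing $d:=(4g+4)^2$, I would run an iterative descent on $T$: at each stage I maintain a subtree $T'$ with $N$ designated leaves from $V(C)$. If some child of the root of $T'$ inherits at least $N/\ell$ of those leaves, I descend into its subtree (reducing the remaining depth by one while losing a factor $\ell$ in the count); otherwise, a pigeonhole on the children forces at least $d$ distinct children each inheriting $\ge N/(d\ell)$ leaves. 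The depth bound $\ell-2$ forces the second case to trigger before the first is used $\ell$ times, so starting from $N=\|C\|>(d\ell)^{\ell}$ we locate a ``spider center'' $v^*\in V(T)$ with $d=(4g+4)^2$ rich children, each of whose subtrees contains many leaves on $C$.

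From this spider I would extract a $K_{4g+4,4g+4}$-minor by forming $2(4g+4)$ disjoint connected branch sets. On one side, pick $4g+4$ of the rich subtrees (augmenting the first one with $v^*$ itself so that adjacency to the other side is witnessed through $v^*$); on the other side, group the remaining $(4g+4)^2-(4g+4)$ rich subtrees into $4g+4$ classes and stitch each class into a single connected branch set by concatenating its subtrees with short arcs of $C$ joining leaves within the class. Cross-adjacencies of the bipartite minor are then realized by tree-edges at $v^*$ and by incidences of the stitching arcs with the first-side subtrees.

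The main obstacle is this final assembly: the cyclic arrangement of leaves of the rich subtrees along $C$ may be highly interleaved on a surface of large genus, so producing disjoint $C$-arcs that merge one group of subtrees without trespassing into the others requires a further cyclic pigeonhole argument on the leaf order. This is precisely where the quadratic excess $(4g+4)^2$ versus the target bipartition size $4g+4$ is exploited, providing enough room to select a compatible grouping of rich children. A secondary technical point is handling the degenerate case where $v^*\in V(C)$ so that $v^*$ itself takes part in the cyclic order on $C$; this is absorbed into the extra slack of the iterative descent by losing at most one more factor of $\ell$.
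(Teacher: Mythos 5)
Your proposal does not prove the statement you were asked to prove. The statement is Ringel's theorem itself---a classification of when $K_{s,t}$ embeds on $\mathbb{S}_g$ or $\N_g$---which the paper states as \Cref{Rin} and \emph{cites} from Ringel's 1965 papers \cite{Rin1965a,Rin1965b} without proof. Proving it requires, on the one hand, explicit genus embeddings of $K_{s,t}$ (the ``sufficiency'' direction, which is the hard combinatorial construction Ringel is famous for), and on the other hand an Euler-formula lower bound using the fact that $K_{s,t}$ has girth $4$ (the ``necessity'' direction). Your argument does neither. Instead, you \emph{invoke} \Cref{Rin} as a black box (``Theorem~\ref{Rin} yields that $K_{s,t}$ embeds on neither $\mathbb{S}_g$ nor $\N_g$'') in order to derive an upper bound on $\fm(\ell,\Sigma)$, i.e., you have written a sketch of \Cref{thm_gen_surf}, not a proof of \Cref{Rin}. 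This is circular relative to the task: you cannot prove a theorem by citing it.

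Even read as a blind attempt at \Cref{thm_gen_surf}, the sketch diverges from the paper's argument and leaves the hardest step unresolved. The paper does not try to exhibit a large $K_{s,t}$-minor directly. It instead takes $t=4g+3$, builds one spider $S_1$ (from a $(W,w)$-tree and a high-degree vertex, via \Cref{Wwtree} and a depth bound from \Cref{cl0}) and one pseudo-spider $S_2$ (from vertices produced by \Cref{center_avoiding}), and uses only the easy consequence of Ringel's theorem that $K_{3,4g+3}$ does not embed on $\Sigma$ (\Cref{Rin3}, \Cref{spiders_cross}) to force many distinct intersection points on the short legs of $S_2$, yielding a pigeonhole contradiction. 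Your route, by contrast, asks for a full $K_{4g+4,4g+4}$-minor assembled by stitching rich subtrees with arcs of $C$; you yourself flag the ``final assembly'' and the cyclic-interleaving of leaves as ``the main obstacle'' and do not supply the required ``further cyclic pigeonhole argument.'' That step is not a technicality: on a high-genus surface the leaves of the rich subtrees can interleave so badly along $C$ that no choice of disjoint stitching arcs produces pairwise-disjoint connected branch sets with the required cross-adjacencies, and the quadratic slack $(4g+4)^2$ versus $4g+4$ by itself does not obviously resolve it. The paper's $K_{3,t}$ formulation sidesteps exactly this difficulty because the third branch set lives in the single face bounded by $C$, where planar non-crossing between alternating leaves is automatic.
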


Recall that the class of graphs that can be embedded on $\Sigma$ is closed under taking minors, see e.g. \cite[Section~5.9]{MT01}. Hence, \Cref{Rin} also implies that a graph which has a `large' bipartite clique as a minor cannot be embedded on $\Sigma$. Here we only need the following simple corollary of this general result.

\begin{Proposition} \label{Rin3}
	Let $g \ge 0,\, t=4g+3$ be integers, $\Sigma \in \{\mathbb{S}_g, \N_g\}$, and $H$ be a graph that has $K_{3,t}$ as a minor. Then $H$ cannot be embedded on $\Sigma$.
\end{Proposition}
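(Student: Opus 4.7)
The plan is to derive this as a direct corollary of Ringel's theorem (\Cref{Rin}) combined with the well-known fact that the class of graphs embeddable on a fixed closed surface is closed under taking minors. The argument is by contradiction: assuming $H$ embeds on $\Sigma$, every minor of $H$ embeds on $\Sigma$ as well, and in particular $K_{3,t}$ does. So the task reduces to verifying that $K_{3,t}$ with $t=4g+3$ does \emph{not} embed on $\Sigma$.

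I would verify this by substituting $s=3$ and $t=4g+3$ into the bounds of \Cref{Rin} and checking that the genus threshold exceeds $g$ in both the orientable and non-orientable cases. For $\mathbb{S}_g$, the theorem requires $g\ge \left\lceil \tfrac{(s-2)(t-2)}{4}\right\rceil = \left\lceil \tfrac{4g+1}{4}\right\rceil = g+1$, which fails. For $\N_g$, the required genus is $\left\lceil \tfrac{(s-2)(t-2)}{2}\right\rceil = \left\lceil \tfrac{4g+1}{2}\right\rceil = 2g+1$, which also fails. Note that $t=4g+3\ge 3$ for every $g\ge 0$, so the hypotheses of \Cref{Rin} are indeed satisfied.

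There is essentially no obstacle here beyond writing the two inequalities cleanly. The only caveat worth flagging is that the result about minor-closedness of surface-embeddable graphs should be cited (e.g. \cite[Section~5.9]{MT01}, as already done earlier in the excerpt); once this is in place, the contradiction with \Cref{Rin} is immediate and the proof is a few lines long.
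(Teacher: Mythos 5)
Your proposal is correct and matches the paper's intended argument: the paper itself presents the proposition as an immediate corollary of Ringel's theorem together with minor-closedness of surface-embeddability, and your substitution $s=3$, $t=4g+3$ into the bounds of \Cref{Rin} (yielding the required genus $g+1$ in the orientable case and $2g+1$ in the nonorientable case, both exceeding $g$) is exactly the verification needed.
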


We shall also need the following definitions. A {\it spider} is a tree that is a subdivision of a star, i.e., a tree with exactly one vertex of degree at least three, called the {\it head} of the spider.  A {\it leg} of a spider is a path with endpoints that are the head and a leaf of the spider.
A $t$-{\it subspider} of a spider  $S$ is a spider obtained by taking the union of some $t$ legs of $S$.
A {\it pseudo-spider} with \textit{head} $u$ and \textit{leaf-set} $U$ is a union of some not necessarily edge-disjoint $u,u'$-paths, $u'\in U$, that are called its \textit{legs}. Note that as a graph, a pseudo-spider may not be a tree, since the vertices of its leaf-set may be of degree larger than 1.

\begin{Lemma} \label{spiders_cross}
	Let $g \ge 0,\, t=4g+3$ be integers, $\Sigma \in \{\mathbb{S}_g, \N_g\}$, $G$ be a graph embedded on $\Sigma$, and $C$ be its facial cycle. Let $S_1$ be a spider and $S_2$ be a pseudo-spider in $G$ with $t$ leaves on $C$ each such that their leaves alternate on $C$. Then $V(S_1)\cap V(S_2)\neq \varnothing$.
\end{Lemma}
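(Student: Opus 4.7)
My plan is to argue by contradiction. Suppose $V(S_1)\cap V(S_2)=\varnothing$; I will exhibit $K_{3,t}=K_{3,4g+3}$ as a minor of a graph embedded on $\Sigma$, which contradicts \Cref{Rin3}. To supply a third ``hub'' vertex besides the heads $u_1, u_2$, I first enrich $G$ to an auxiliary graph $G^+$ by inserting a new vertex $c$ in the face $F$ bounded by $C$, together with pairwise non-crossing arcs drawn inside $F$ from $c$ to every vertex of $V(C)$. Since all new arcs lie in the single face $F$, the graph $G^+$ is still embedded on $\Sigma$.

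I then describe the branch sets of the target $K_{3,t}$ minor. The three ``large'' branch sets will be $B_A = V(S_1) \setminus L_1$ (connected, because deleting the leaf set of the spider tree $S_1$ leaves a subtree), $B_B = V(S_2)$ (connected by the definition of a pseudo-spider), and $B_C = \{c\}$. For each $i \in [t]$, the ``small'' branch set is $B_{X_i} = V(R_i) \setminus \{w_i\}$, where $R_i$ is the segment of $C$ joining $v_i$ and $w_i$ whose interior contains no other leaves (such an $R_i$ exists by the alternation of $L_1$ and $L_2$ along $C$); then $B_{X_i}$ is a subpath of $C$ containing $v_i$, connected and reducing to $\{v_i\}$ in the extreme case $v_i w_i \in E(C)$. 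The three required adjacencies are delivered by the leg-edge of $S_1$ at $v_i$ (for $B_A$--$B_{X_i}$), the $C$-edge from $w_i$ to its $R_i$-neighbour, which equals $v_i$ in the extreme case (for $B_B$--$B_{X_i}$), and any of the newly added $c$-edges (for $B_C$--$B_{X_i}$).

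The main obstacle I expect is verifying pairwise disjointness of these branch sets, which reduces to the identities $V(S_i) \cap V(C) = L_i$ for $i = 1, 2$: a priori, an inner vertex of a leg could lie on some $R_j$ and force $B_A$ or $B_B$ to intersect $B_{X_j}$. I plan to handle this either by a preliminary trimming of the legs so that each meets $C$ only at its endpoint (which may require a careful re-indexing of the truncated leaves around $C$ to preserve alternation), or by adjusting the affected small branch sets $B_{X_j}$ to exclude any problematic inner vertex while keeping them connected and retaining all three required adjacencies; the degenerate configurations $u_1 \in V(C)$ or $u_2 \in V(C)$ will be dealt with by a short direct case analysis. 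Once disjointness is secured, \Cref{Rin3} applied to the $K_{3,4g+3}$ minor in $G^+$ delivers the required contradiction.
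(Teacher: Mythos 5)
Your high-level strategy---contradiction via a $K_{3,t}$ minor and \Cref{Rin3}, with a fresh hub vertex $c$ drawn inside the face $F$---is sound and parallels what the paper does. The genuine gap is in the remaining $t$ branch sets. You take $B_{X_i}=V(R_i)\setminus\{w_i\}$ to be segments of the facial cycle $C$, so every branch set except $B_C$ lies entirely in $V(G)$. Disjointness of $B_A$ and $B_B$ from each $B_{X_j}$ then requires that no interior leg-vertex of $S_1$ or $S_2$ lies on any $R_j$, and there is no reason this should hold: the legs are arbitrary paths in $G$ and may revisit $V(C)$ as often as they please. You flag this obstacle yourself, but neither sketched remedy is obviously workable. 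Trimming a leg at its first return to $C$ relocates that leaf to a new position on $C$ and can destroy the alternation that both the choice of the segments $R_i$ and the crossing-free drawing of $c$'s edges inside $F$ depend on; shrinking a $B_{X_j}$ past an offending vertex can disconnect it or discard the only vertex carrying the $B_A$- or $B_B$-attachment. These failures are structural, not isolated degeneracies to be dispatched ``by a short direct case analysis.''

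The paper avoids the issue by making all of the small branch sets synthetic. It places inside $F$ a star with a new centre $w$ and $t$ new leaves $w_1,\dots,w_t$, joins each $w_j$ by new edges to the two consecutive alternating leaves (one from $S_1$, one from $S_2$), and takes the branch sets to be $V(S_1)$, $V(S_2)$, $\{w\}$, and the singletons $\{w_j\}$. Since $w$ and the $w_j$ are freshly added, disjointness is automatic and requires no bookkeeping; the alternation of the leaves on $C$ is used only to draw the auxiliary tree in the disc $F$ without crossings. To repair your argument, replace each segment $B_{X_i}$ by such a fresh vertex drawn in $F$ and attached to $v_i$, to $w_i$, and to $c$; that is precisely the paper's construction with $c$ in the role of $w$.
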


\begin{proof}
	Assume that $V(S_1)\cap V(S_2) = \varnothing$.
	Let $u_1, v_1, \ldots, u_t, v_t$ be the leaves of $S_1$ and $S_2$ that appear alternatingly in this order in $C$.  
	Consider a star  $S$ with center $w$ and  $t$ leaves $w_1, \ldots, w_t$, where all the vertices are not the vertices of $G$.
	Let $T$ be a tree obtained from $S$  by adding edges $w_ju_j$ and $w_jv_j$, $j=1, \ldots, t$.  We can embed $T$ on the face bounded by $C$.
	Then we see that the graph $S_1\cup S_2\cup T$ embedded on $\Sigma$ has $K_{3,t}$ as a minor. This contradicts \Cref{Rin3}.
\end{proof}

\vspace{0mm}

\begin{proof}[Proof of Theorem \ref{thm_gen_surf}]
	Let $g \ge 1$, $t=4g+3$ and $\ell\ge 3$ be integers, $\Sigma \in \{\mathbb{S}_g, \N_g\}$, $G$ a \ms, and $C$ it its facial cycle.  Assume  for the contrary that $\|C\| >  \big((t+1)^2\ell\big)^{\ell}$.    We shall find a spider $S_1$ with head $z$ and a pseudo-spider $S_2$ with $t$ leaves on $C$ and legs of length less than $\ell$ such that there are $t \ell$ leaves  of $S_1$ on $C$ between any two consecutive leaves of $S_2$ and $z\not \in V(S_2)$. We shall argue that there is a leg of $S_2$ with at least $\ell$  distinct vertices from $S_1$, i.e., more than the total number of vertices in that leg. This will result in a final contradiction.

	{\it Building $S_1$.}   Consider a subset $W\subset V(C)$  such that the distance between its vertices on $C$  is at least  $\ell+1$ and such that $|W| \ge \big((t+1)^2\ell\big)^{\ell-1}$. Fix a vertex $w\in W$ and a $(W,w)$-tree $T$, which exists by \Cref{Wwtree}.
	On the one hand, note that $W \subseteq V(T)$ by definition. On the other hand, recall that the pairwise distance between the vertices of $V(C)$ is at most $\ell-2$ by  \Cref{cl0}. Thus the height of $T$ as a rooted tree with the root $w$ is at most $\ell-1$. Hence, if the maximum degree of $T$ is at most $d \coloneqq (t+1)^2\ell$, then $|V(T)|< d^{\ell-1}\le |W|$, 
	a contradiction. Therefore, there exists a vertex $z \in V(T)$ of degree more than $(t+1)^2\ell$ in $T$.
	By following the edges incident to $z$ to the respective leaves of $T$, we see that $T$ contains a spider $S_1$ with head $z$ and with $(t+1)^2\ell$ legs such that each of its leaves is also a leaf of $T$. Denote the set of leaves of $S_1$ by $W'$ and observe that $W' \subseteq W$ by construction. 
	
	{\it Building $S_2$.} By \Cref{center_avoiding}, there is a set of vertices $U$ obtained by picking one vertex between every two consecutive on $C$ vertices of $W'$ such that their distance to $z$ is greater than $\ell/2-1$. Let $U' \subset U$ consists of every $t\ell^{\rm th}$ vertex of $U$ in their order on $C$. Recall that $|U|=|W'|= (t+1)^2\ell$, and thus $|U'|\ge t+1$. Pick a vertex $u\in U'$ and a pseudo-spider $S_2$ with head $u$ and $t$ legs that are shortest paths in $G$ from $u$ to some $t$ vertices in $U'\sm\{u\}$.
	Observe that  $z\not\in V(S_2)$ because otherwise $z$ is on some leg of $S_2$, say with a leaf $u'$, and thus $\dist_G(u,u')=\dist_G(u,z)+\dist_G(u',z)> 2(\frac{\ell}{2}-1)$ which contradicts \Cref{cl0}.

	{\it Crossings between $S_1$ and $S_2$.}
	Recall  that there are at least $t\ell$ leaves of $S_1$ between any two consecutive leaves of $S_2$ by construction. Therefore,  there are  $t\ell$  pairwise edge-disjoint $t$-subspiders of $S_1$ that are leaf-alternating with $S_2$.  Each of these subspiders share a common vertex with $S_2$ by \Cref{spiders_cross}. Moreover, these vertices are different for different $t$-subspiders because $z \notin V(S_2)$. By the pigeonhole principle, there is a leg of $S_2$ that contains at least $(t\ell)/t = \ell$ different vertices. However, the legs of $S_2$ have lengths less than $\ell$ by \Cref{cl0}. This contradiction yields that $\|C\| \le  \big((t+1)^2\ell\big)^{\ell}$, as desired.
\end{proof}

\end{document}